\newtheorem{theorem}{Theorem}[section]
\newtheorem{lemma}[theorem]{Lemma}
\numberwithin{equation}{section}
\begin{document}
	
	%
	%
	%
	%
	%
	%
	%
	%
	%

	\title[Transmission problem with delay and weight]
	{A transmission problem for waves under time-varying delay and nonlinear weight}

	\author[Carlos A. S. Nonato]{Carlos A. S. Nonato}
	
	\address{%
		Federal University of Bahia, Mathematics Departament,
		Salvador, 40170-110,  Bahia, Brazil}
	\email{carlos.mat.nonato@hotmail.com}
	
	\author{Carlos A. Raposo }
	\address{Federal University of S\~ao Jo\~ao del-Rei, Mathematics Departament,
		S\~ao Jo\~ao del-Rei, 36307-352, Minas Gerais, Brazil}
	\email{raposo@ufsj.edu.br}
	
	\author{Waldemar D. Bastos}
	\address{S\~ao Paulo State University, Mathematics Departament,
		S\~ao Jos\'e do Rio Preto, 15054-352, S\~ao Paulo, Brazil}
	\email{waldemar.bastos@unesp.br}
	\subjclass{Primary 35L20; 35B40; Secondary 93D15}
	
	\keywords{Transmission problem, Time-variable delay, Nonlinear weights, Exponential stability}
	
	
	\begin{abstract}
This manuscript focus on in the transmission problem for one dimensional waves with nonlinear weights on the frictional damping and time-varying delay. We prove global existence of solutions using Kato's variable norm technique and we show the  exponential stability by the energy method with the construction of a suitable Lyapunov functional.
	\end{abstract}
	
	\maketitle

\section{Introduction}

In this paper we investigate global existence and decay properties  of solutions for a transmission problem for waves with nonlinear weights and time-varying delay. We consider the following system
\begin{equation}\label{EQ1.1}
\begin{gathered}
u_{tt}(x,t) - au_{xx}(x,t) + \mu_1(t)u_t(x,t) + \mu_2(t)u_t(x,t-\tau(t)) = 0 \,\, \mbox{in } \Omega \times ]0, \infty[,\\
v_{tt}(x,t) - bv_{xx}(x,t) = 0 \,\, \mbox{in } ]L_1, L_2[ \times ]0, \infty[,
\end{gathered}
\end{equation}
where $0 < L_1 < L_2 < L_3$, $\Omega = ]0, L_1[ \cup ]L_2, L_3[$ and $a$, $b$ are positive constants.

\setlength{\unitlength}{1cm}
\begin{center}
\begin{picture}(12.5,4.5)
\put (0,2){\framebox(12,1){}}
\put (0,1.5){\line(0,1){1.9}}
\put (12,1.48){\line(0,1){1.9}}
\multiput (-0.4,1.2)(0,0.2){10}{\line(1,1){0.4}}
\multiput (12,1.5)(0,0.2){10}{\line(1,1){0.4}}
\multiput (0,2)(0.2,0){21}{\line(0,1){1}}

\put(8,2){\line(0,1){1}}
\multiput (8,2)(0.2,0){21}{\line(0,1){1}}

\put (0.2,3.2){{\small\bf Part with delay }}
\put (4.2,3.2){{\small\bf Elastic Part }}
\put (8.2,3.2){{\small\bf Part with delay }}
\put (2,1){\vector(-1,0){2}}
\put (2,1){\vector(1,0){2}}
\put (6,1){\vector(-1,0){2}}
\put (6,1){\vector(1,0){2}}
\put (10,1){\vector(-1,0){2}}
\put (10,1){\vector(1,0){2}}

\multiput (0,0.8)(4,0){4}{\line(0,1){0.4}}
\put (0,0.5){$0$}
\put (4,0.5){$L_1$}
\put (8,0.5){$L_2$}
\put (12,0.5){$L_3$}
\put(1.7,1.5){$u(x,t)$}
\put(5.7,1.5){$v(x,t)$}
\put(9.7,1.5){$u(x,t)$}
\end{picture}
 \end{center}

The system \eqref{EQ1.1} is subjected to the transmission conditions
\begin{equation}
\begin{gathered}
u(L_i, t) = v(L_i, t), \quad i= 1,2 \\
au_x(L_i, t) = bv_x(L_i, t), \quad i= 1,2,
\end{gathered} \label{EQ1.2}
\end{equation}
the boundary conditions
\begin{equation}\label{EQ1.3}
u(0, t) = u(L_3, t) = 0
\end{equation}
and initial conditions
\begin{equation}
\begin{gathered}
u(x,0) = u_0(x), \quad u_t(x,0) = u_1(x) \quad \mbox{on } \Omega, \\
u_t(x, t-\tau(0)) = f_0(x, t-\tau(0)) \quad \mbox{in } \Omega \times ]0, \tau(0)[, \\
v(x,0) = v_0(x), \quad v_t(x,0) = v_1(x) \quad \mbox{on } ]L_1, L_2[,
\end{gathered} \label{EQ1.4}
\end{equation}
where the initial datum $\left(u_0, u_1, v_0, v_1, f_0 \right)$ belongs to a suitable Sobolev space.

Here $0 < \tau(t)$ is the time-varying delay and $\mu_1(t)$ and $\mu_2(t)$ are nonlinear weights acting on the frictional damping. As in \cite{NICAISE_PIGNOTTI_1}, we assume that
\begin{equation}\label{hipot_1}
\tau(t) \in W^{2,\infty}([0,T]), \quad \forall T>0
\end{equation}
and that there exist positive constants $\tau_0$, $\tau_1$ and $d$ satisfying
\begin{equation}\label{hipot_2}
0 < \tau_0 \leq \tau(t) \leq \tau_1, \,\,\, \tau'(t) \leq d < 1, \quad \forall t>0.
\end{equation}

We are interested in proving the exponential stability for the problem \eqref{EQ1.1}-\eqref{EQ1.4}. In order to obtain this, we will assume that
\begin{equation}\label{assumption}
\max \{1, \frac{a}{b}\} < \frac{L_1 + L_3 - L_2}{2(L_2 - L_1)}.
\end{equation}
As described in \cite{Benseghir}, the assumption \eqref{assumption} gives the relationship between the boundary regions and the transmission permitted. It can be also seen as a restriction on the wave speeds of the two equations and the damped part of the domain. It is known that for Timoshenko systems \cite{Soufyane} and Bresse systems \cite{Boussouira_Rivera_Dilberto} the wave speeds always control the decay rate of the solution. It is an interesting open question to investigate the behavior of the solution when \eqref{assumption} is not satisfied.

Time delay is the property of a physical system by which the response to an applied force is delayed in its effect, and the central question is that delays source can destabilize a system that is asymptotically stable in the absence of delays, see \cite{DATKO,Datko2,Guesmia,Xu}.

Transmission problems are closely related to the design of material components, attracting considerable attention in recent years, e.g., in the analysis of damping mechanisms in the metallurgical industry or smart materials technology, see \cite{Balmes,Rao} and the references therein. Studies of fluid structure interaction and the added mass effect, also known as virtual mass effect, hydrodynamic mass, and hydroelastic vibration of structures, started with H. Lamb \cite{Lamb} who investigated the vibrations of a thin elastic circular plate in contact with water. Experimental study of impact on composite plates with fluid-structure interaction was investigated in \cite{Kwon}. From the mathematical point of view a transmission problem for wave propagation consists on a hyperbolic equation for which the corresponding elliptic operator has discontinuous coefficients.

We consider the wave propagation over bodies consisting of two physically different materials, one purely elastic and another subject to frictional damping. The type of wave propagation generated by mixed materials originates a transmission (or diffraction) problem.

To the best of our knowledge, the first  contribution in literature for transmission problem with a time delay
was given by  A. Benseghir in \cite{Benseghir}. More precisely,  in \cite{Benseghir} the transmission  problem
\begin{equation}\label{problem_C}
\begin{gathered}
u_{tt} - au_{xx} + \mu_1 u_t(x,t) + \mu_2 u_t(x,t-\tau) = 0, \,\, \mbox{in } \Omega \times ]0, \infty[, \\
v_{tt} - bv_{xx} = 0, \,\, \mbox{in } ]L_1, L_2[ \times ]0, \infty[.
\end{gathered}
\end{equation}
with constant weights $\mu_1, \mu_2$ and time delay $\tau > 0$ was studied. Under an appropriate assumption on the weights of the two feedbacks ($\mu_1 < \mu_2$), it was proved the well-posedness of the system and, under condition \eqref{assumption}, it was  established an exponential decay result.

The result in \cite{Benseghir} were improved by S. Zitouni et al. \cite{Zitouni_Abdelouaheb_Zennir_Rachida}. There,  the authors considered the problem with a time-varying delay $\tau(t)$ of the form
\begin{equation}\label{problem_E}
\begin{gathered}
u_{tt} - au_{xx} + \mu_1 u_t(x,t) + \mu_2 u_t(x,t-\tau(t)) = 0, \,\, \mbox{in } \Omega \times ]0, \infty[, \\
v_{tt} - bv_{xx} = 0, \,\, \mbox{in } ]L_1, L_2[ \times ]0, \infty[
\end{gathered}
\end{equation}
and proved the global existence and exponential stability under suitable assumptions on the delay term and assumption \eqref{assumption}.
Without delay, systems  \eqref{problem_C}, \eqref{problem_E} has been investigated in \cite{Bastos_Raposo}.

The transmission problem with history and delay was considered by G. Li et al., \cite{Li} where the equations were expressed as
\begin{equation}\label{problem_D}
\begin{gathered}
u_{tt} \! - \! au_{xx} \! + \!\! \int_{0}^{\infty} \!\! g(s)u_{xx}(x, t-s)ds \! + \! \mu_1 u_t(x,t) \! + \! \mu_2 u_t(x,t-\tau) = 0, \, \mbox{in } \Omega \times ]0, \infty[,\\
v_{tt} \! - \! bv_{xx} = 0, \, \mbox{in } ]L_1, L_2[ \times ]0, \infty[.
\end{gathered}
\end{equation}
Under suitable assumptions on the delay term and on the function $g$, the authors proved an exponential stability result for two cases. In the first case, they considered $\mu_2 < \mu_1$ and for second case, they assumed that $\mu_2 = \mu_1$.

S. Zitouni  et al., \cite{Zitouni_Ardjouni_Zennir_Amiar_2} extended the result in \cite{Li}  for  varying delay.
In \cite{Zitouni_Ardjouni_Zennir_Amiar_2} was proved existence and the uniqueness of the solution by using the semigroup theory and the exponential stability of the solution by the energy method for the following problem
\begin{equation}\label{problem_D_1}
\begin{gathered}
u_{tt} \! - \! au_{xx} \! + \!\! \int_{0}^{\infty} \!\!\! g(s)u_{xx}(x, t \! - \! s)ds \! + \! \mu_1 u_t(x,t) \! + \! \mu_2 u_t(x,t-\tau(t)) \! = \! 0, \, \mbox{in } \Omega \times ]0, \infty[,\\
v_{tt} \! - \! bv_{xx} \! = \! 0, \, \mbox{in } ]L_1, L_2[ \times ]0, \infty[.
\end{gathered}
\end{equation}

Stability to localized viscoelastic transmission problem was considered by Mu\~noz Rivera et al., \cite{Rivera_Octavio_Mauricio} where they considered
\begin{equation}\label{problem_D2}
\begin{gathered}
\rho \phi_{tt} + \sigma_{x} = 0, \\
\sigma(x,t) = \alpha(x)\varphi_{x} - k(x)\varphi_{xt} - \beta(x)\varphi_{xt} = 0.
\end{gathered}
\end{equation}
In \cite{Rivera_Octavio_Mauricio} the authors investigated the effect of the positions of the dissipative mechanisms on a bar with three component $]0,L_0[,\, ]L_0,L_1[, \, ]L_1,L[$, and showed that the system is exponentially stable if and only if the viscous component is not in the center of the bar. In other case, they showed the lack of exponential stability, and that the solutions still decay but just polynomially to zero.

The case of time-varying delay has already been considered in other works, such as  \cite{Orlov_Fridman,Kirane_Said_Anwar,Liu,Zitouni_Ardjouni_Zennir_Amiar}. Wave equations with time-varying delay and nonlinear weights was considered in the recent work of Barros et al., \cite{Vanessa} where was studied the equation given by
\begin{equation}\label{NLS}
u_{tt} - u_{xx} + \mu_1(t)u_t +\mu_2(t)u_t(x,t-\tau(t)) = 0, \,\, \mbox{in } ]0,L[ \times ]0, +\infty[.
\end{equation}
Under proper conditions on nonlinear weights $\mu_1(t), \mu_2(t)$ and  $\tau(t)$, authors proved global existence and an estimate for the decay rate of the energy.

In the present work we improve the results in \cite{Zitouni_Abdelouaheb_Zennir_Rachida} where, for constant weights $\mu_1(t)=\mu_1$,   $\mu_2(t)=\mu_2$ and under adequate assumptions regarding the weight and time-varying delay, was proved the well posedness and singularity of solutions by using the semigroup theory. Authors also showed exponential stability by introducing an appropriate Lyapunov functional.

Here we consider a transmission problem with nonlinear weights and time-varying delay, which is the main characteristic of this work. Although there are some works on laminated beam and on Timoshenko system with delay, all of them consider constant weights, i.e., $\mu_1$ and $\mu_2$ are constants. To the best of our knowledge, there is no result for these systems with nonlinear weights. Moreover, since the weights are nonlinear, a difficulty comes in: the operator is nonautonomous. This makes hard the use semigroup theory to study well-posedness. To overcome it we use the Kato's variable norm technique together with semigroup theory to show that the system is well-posed.

The remainder of this paper is organized as follows. In section \ref{sec:Notation_preliminaries} we introduce some notations and prove the dissipative property for the energy of the system. In the section \ref{sec:global_solution}, by using Kato's variable norm technique and under some restriction on the non-linear weights and the time-varying delay, the system is shown to be well-posed. In section \ref{sec:Exponential_stability}, we present the result of exponential stability by energy methods, and by using suitable sophisticated estimates for multipliers to construct an appropriated Lyapunov functional.

\section{Notation and preliminaries}\label{sec:Notation_preliminaries}

We start by setting the following hypothesis:\\
{\bf (H1)}  $\mu_1:\mathbb{R}_+ \rightarrow ]0,+\infty[$ is a non-increasing function of class $C^1(\mathbb{R}_+)$ satisfying
\begin{equation}\label{H1}
\left| \frac{\mu'_1(t)}{\mu_1(t)}\right| \leq M_1, \quad \forall t \geq 0,
\end{equation}
where $M_1 > 0$ is a constant.

{\bf (H2)}  $\mu_2:\mathbb{R}_+ \rightarrow \mathbb{R}$ is a function of class $C^1(\mathbb{R}_+)$, which is not necessarily positive or monotone, such that
\begin{gather}\label{H2_1}
\left| \mu_2(t) \right| \leq \beta \mu_1(t), \\
\left| \mu'_2(t) \right| \leq M_2 \mu_1(t),
	\end{gather}
for some $0 < \beta < \sqrt{1-d}$ and $M_2>0$.

As in Nicaise and Pignotti \cite{NICAISE_PIGNOTTI_1} we introduce the new variable
\begin{equation}\label{EQ2.1}
z(x,\rho,t) = u_t(x,t - \tau(t) \rho),\quad (x,\rho) \in \Omega \times ]0,1[, \; t>0.
\end{equation}

It is easily verified that the new variable satisfies
$$\tau(t)z_t(x,\rho,t) + (1 - \tau'(t)\rho)z_\rho(x,\rho,t) = 0$$
and the problem \eqref{EQ1.1} is equivalent to

\begin{equation}\label{EQ2.2}
\begin{gathered}
u_{tt}(x,t) - au_{xx}(x,t) + \mu_1(t)u_t(x,t) + \mu_2(t)z(x,1,t) = 0 \quad \mbox{in } \Omega \times ]0, \infty[, \\
v_{tt}(x,t) - bv_{xx}(x,t) = 0 \quad \mbox{in } ]L_1, L_2[ \times ]0, \infty[, \\
\tau(t)z_t(x,\rho,t) + (1 - \tau'(t)\rho)z_\rho(x,\rho,t) = 0 \quad \mbox{in } \Omega \times ]0,1[ \times ]0, \infty[.
\end{gathered}
\end{equation}
This system is subject to the transmission conditions
\begin{equation}
\begin{gathered}
u(L_i, t) = v(L_i, t), \quad i= 1,2, \\
au_x(L_i, t) = bv_x(L_i, t), \quad i= 1,2,
\end{gathered} \label{EQ2.2.1}
\end{equation}
the boundary conditions
\begin{equation}\label{EQ2.2.2}
u(0, t) = u(L_3, t) = 0
\end{equation}
and the initial conditions
\begin{equation}\label{EQ2.3}
\begin{gathered}
u(x,0) = u_0(x), \quad u_t(x,0) = u_1(x) \quad \mbox{on } \Omega, \\
v(x,0) = v_0(x), \quad v_t(x,0) = v_1(x) \quad \mbox{on } \Omega, \\
z(x,\rho,0) = u_t(x, -\tau(0)\rho) = f_0(x, -\tau(0)\rho), \quad (x,\rho) \quad \mbox{in } \Omega \times ]0, 1[.
\end{gathered}
\end{equation}

For any regular solution of \eqref{EQ2.2}, we define the energy as
\begin{gather*}
E_1(t) = \frac{1}{2} \int_\Omega \left( |u_t(x,t)|^2 + a|u_x(x,t)|^2 \right)\,dx, \\
E_2(t) = \frac{1}{2} \int_{L_1}^{L_2}\left( |v_t(x,t)|^2 + b|v_x(x,t)|^2 \right)\,dx.
\end{gather*}
The total energy is defined by
\begin{equation}\label{EQ2.8}
E(t) = E_1(t) + E_2(t) + \frac{\xi(t)\tau(t)}{2} \int_\Omega \int_{0}^{1} z^2(x,\rho,t)\,d\rho\,dx,
\end{equation}
where
\begin{equation}\label{hipotese_7}
\xi(t)=\bar{\xi}\mu_1(t)
\end{equation}
is a non-increasing function of class $C^1(\mathbb{R}_+)$ and $\bar{\xi}$ be a positive constant such that
\begin{equation}\label{hipotese_8}
\frac{\beta}{\sqrt{1-d}} < \bar{\xi} < 2 - \frac{\beta}{\sqrt{1-d}}.
\end{equation}

Our first result states that the energy is a non-increasing function.

\begin{lemma}\label{Lemma_2.2}
Let $(u,v,z)$ be a solution to the system \eqref{EQ2.2}-\eqref{EQ2.3}. Then the energy functional defined by \eqref{EQ2.8} satisfies
\begin{align}\label{derivate_energy}
E'(t) & \leq -\mu_1(t) \left( 1-\frac{\bar{\xi}}{2}- \frac{\beta}{2\sqrt{1-d}} \right) \int_\Omega u_t^2\,dx \\
      & \quad - \mu_1(t) \left( \frac{\bar{\xi}(1-\tau'(t))}{2}- \frac{\beta \sqrt{1-d}}{2} \right) \int_\Omega z_1^2(x,\rho,t)\,dx \nonumber\\
      & \leq 0 \nonumber.
\end{align}
\end{lemma}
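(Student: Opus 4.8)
The plan is to use the energy (multiplier) method: differentiate the total energy $E(t)$ along a regular solution of \eqref{EQ2.2} and bound the result from above by the negative quadratic form in \eqref{derivate_energy}. The computation splits into three pieces, coming from $E_1$, $E_2$, and the delay integral, and the decisive feature is a cancellation of all interface terms via the transmission conditions.

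First I would multiply the $u$-equation by $u_t$ and integrate over $\Omega$, and multiply the $v$-equation by $v_t$ and integrate over $]L_1,L_2[$. After integrating by parts in $x$, the terms involving $u_x u_{xt}$ and $v_x v_{xt}$ cancel against the spatial derivatives of the elastic energies, leaving
\begin{equation*}
E_1'(t) = a\bigl[u_x u_t\bigr]_{\partial\Omega} - \mu_1(t)\int_\Omega u_t^2\,dx - \mu_2(t)\int_\Omega u_t\, z(x,1,t)\,dx, \qquad E_2'(t) = b\bigl[v_x v_t\bigr]_{L_1}^{L_2}.
\end{equation*}
The boundary contributions at $x=0$ and $x=L_3$ vanish by \eqref{EQ2.2.2}, since $u_t=0$ there. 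At the interfaces $x=L_1,L_2$ the key step is to add the two brackets and invoke \eqref{EQ2.2.1}: because $u_t(L_i,t)=v_t(L_i,t)$ and $au_x(L_i,t)=bv_x(L_i,t)$, each term $a u_x u_t$ from $E_1'$ exactly annihilates the matching term $b v_x v_t$ from $E_2'$. Hence $E_1'(t)+E_2'(t)$ reduces to the damping term and the delay cross term alone.

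Next I would differentiate the delay integral $\tfrac{\xi(t)\tau(t)}{2}\int_\Omega\int_0^1 z^2\,d\rho\,dx$, using the transport equation $\tau z_t=-(1-\tau'\rho)z_\rho$ to eliminate $z_t$ and then integrating by parts in $\rho$ with $z z_\rho=\tfrac12\partial_\rho(z^2)$. The boundary values $z(x,0,t)=u_t(x,t)$ and $z(x,1,t)=u_t(x,t-\tau)$ enter, and the derivative of the prefactor produces $\tfrac{\xi'(t)\tau(t)}{2}\int_\Omega\int_0^1 z^2\,d\rho\,dx$, which is $\le 0$ since $\xi=\bar\xi\mu_1$ with $\mu_1$ non-increasing; this term is simply discarded. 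What survives is $+\tfrac{\xi}{2}\int_\Omega u_t^2\,dx-\tfrac{\xi(1-\tau')}{2}\int_\Omega z^2(x,1,t)\,dx$.

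The hard part will be controlling the indefinite delay cross term $-\mu_2(t)\int_\Omega u_t\, z(x,1,t)\,dx$, since $\mu_2$ need not be positive or signed. Here I would use \eqref{H2_1}, $|\mu_2(t)|\le\beta\mu_1(t)$, together with Young's inequality in the balanced form $|u_t||z|\le\tfrac12\bigl(\tfrac{1}{\sqrt{1-d}}u_t^2+\sqrt{1-d}\,z^2\bigr)$, the weight $\sqrt{1-d}$ being chosen precisely so that the resulting coefficients are $\tfrac{\beta}{2\sqrt{1-d}}$ and $\tfrac{\beta\sqrt{1-d}}{2}$. Substituting $\xi=\bar\xi\mu_1$ and collecting the coefficients of $\int_\Omega u_t^2\,dx$ and $\int_\Omega z^2(x,1,t)\,dx$ then reproduces exactly \eqref{derivate_energy}. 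Finally, to conclude $E'(t)\le 0$ I would check that both coefficients are non-negative: the $u_t^2$-coefficient is non-negative because \eqref{hipotese_8} gives $\bar\xi\le 2-\tfrac{\beta}{\sqrt{1-d}}$, while the $z^2$-coefficient is non-negative because $\tau'(t)\le d<1$ forces $1-\tau'\ge 1-d$, so that $\bar\xi(1-\tau')\ge\bar\xi(1-d)\ge\beta\sqrt{1-d}$ by the lower bound $\bar\xi>\tfrac{\beta}{\sqrt{1-d}}$ in \eqref{hipotese_8}. I also note that the integrand in the second line of \eqref{derivate_energy} should read $z^2(x,1,t)$.
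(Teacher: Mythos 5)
Your proposal is correct and follows essentially the same route as the paper's proof: multiply the two wave equations by $u_t$ and $v_t$, cancel the interface terms via \eqref{EQ2.2.1}--\eqref{EQ2.2.2}, differentiate the delay integral using the transport equation and discard the non-positive $\frac{\xi'(t)\tau(t)}{2}\int_\Omega\int_0^1 z^2\,d\rho\,dx$ term, and absorb the cross term with Young's inequality weighted by $\sqrt{1-d}$ together with $|\mu_2(t)|\le\beta\mu_1(t)$ and $\xi=\bar\xi\mu_1$. Your explicit verification that both final coefficients are non-negative (using \eqref{hipotese_8} and $\tau'(t)\le d$) is a detail the paper leaves implicit, and you are right that $z_1^2(x,\rho,t)$ in the lemma statement is a typo for $z^2(x,1,t)$.
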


\begin{proof}
Multiplying the first and second equations of \eqref{EQ2.2} by $u_t(x,t)$ and $v_t(x,t)$, integrating on $\Omega$ and $]L_1,L_2[$ respectively and using integration by parts, we get

\begin{align}\label{EQ2.12}
\frac{1}{2}\frac{d}{dt} \int_\Omega \left( u_t^2 + au_x^2 \right)\,dx = & -\mu_1(t) \int_\Omega u_t^2\,dx - \mu_2(t)\int_\Omega z(x,1,t) u_t\,dx + a \left[ u_x u_t \right]_{\partial \Omega},
\end{align}
\begin{equation}
\frac{1}{2} \frac{d}{dt} \int_{L_1}^{L_2}\left( v_t^2 + bv_x^2 \right)\,dx = b \left[ v_x v_t \right]_{L_1}^{L_2}. \label{EQ2.13}
\end{equation}

Now multiplying the third equation of \eqref{EQ2.3} by $\xi(t)z(x,\rho,t)$ and integrating on $\Omega \times ]0,1[$, we obtain
\begin{align*}
&\tau(t)\xi(t)\int_{\Omega} \int_0^1 z_t(x,\rho,t)z(x,\rho,t)\,d\rho\,dx = -\frac{\xi(t)}{2} \int_{\Omega} \int_0^1 (1- \tau'(t)\rho)\frac{\partial}{\partial \rho}(z(x,\rho,t))^2\,d\rho\,dx.
\end{align*}
Consequently,
\begin{align}\label{equ1}
\frac{d}{dt} \left( \frac{\xi(t)\tau(t)}{2} \int_{\Omega} \int_0^1 z^2(x,\rho,t)\,d\rho\,dx \right) & = \frac{\xi(t)}{2} \int_{\Omega} (z^2(x,0,t)-z^2(x,1,t))\,dx \\
&\quad + \frac{\xi(t)\tau'(t)}{2} \int_{\Omega} \int_0^1 z^2(x,1,t)\,d\rho\,dx \nonumber \\
&\quad + \frac{\xi'(t)\tau(t)}{2} \int_{\Omega} \int_0^1 z^2(x,\rho,t)\,d\rho\,dx. \nonumber
\end{align}
From \eqref{EQ2.8}, \eqref{EQ2.12}, \eqref{EQ2.13}, \eqref{equ1} and using the conditions \eqref{EQ2.2.1} and \eqref{EQ2.2.2}, we know that
\begin{align}\label{equ2}
E'(t) &= \frac{\xi(t)}{2} \int_{\Omega} u_t^2\,dx - \frac{\xi(t)}{2} \int_{\Omega} z^2(x,1,t)\,dx + \frac{\xi(t)\tau'(t)}{2} \int_{\Omega} z^2(x,1,t)\,dx \\
& \quad + \frac{\xi'(t)\tau(t)}{2} \int_{\Omega} \int_0^1 z^2(x,\rho,t)\,d\rho\,dx - \mu_1(t)\int_{\Omega} u_t^2\,dx - \mu_2(t)\int_{\Omega} z(x,1,t) u_t\,dx. \nonumber
\end{align}
Due to Young's inequality, we have
\begin{align}\label{equ3}
\mu_2(t) \int_{\Omega} z(x,1,t) u_t\,dx \leq & \frac{\left| \mu_2(t) \right|}{2\sqrt{1-d}} \int_{\Omega}u_t^2\,dx + \frac{\left| \mu_2(t) \right| \sqrt{1-d}}{2} \int_{\Omega} z^2(x,1,t)\,dx.
\end{align}
Inserting \eqref{equ3} into \eqref{equ2}, we obtain

\begin{align*}
E'(t) &\leq -\left( \mu_1(t) - \frac{\xi(t)}{2} - \frac{\left| \mu_2(t) \right|}{2\sqrt{1-d}} \right) \int_{\Omega} u_t^2\,dx \\
&\;\;\;\; - \left( \frac{\xi(t)}{2} - \frac{\xi(t)\tau'(t)}{2} - \frac{\left| \mu_2(t) \right| \sqrt{1-d}}{2} \right) \int_{\Omega} z^2(x,1,t)\,dx \\
&\;\;\;\; + \frac{\xi'(t)\tau(t)}{2} \int_{\Omega} \int_0^1 z^2(x,\rho,t)\,d\rho\,dx \\
&\leq -\mu_1(t) \left( 1-\frac{\bar{\xi}}{2}- \frac{\beta}{2\sqrt{1-d}} \right) \int_{\Omega} u_t^2\,dx \\
& \;\;\;\; - \mu_1(t) \left( \frac{\bar{\xi}(1-\tau'(t))}{2}- \frac{\beta \sqrt{1-d}}{2} \right) \int_{\Omega} z^2(x,1,t)\,dx \\
& \leq 0.
\end{align*}
Hence, the proof is complete.
\end{proof}

\section{Global solution}\label{sec:global_solution}

In this section, our goal is to prove existence and uniqueness of solutions to the system \eqref{EQ2.2} - \eqref{EQ2.3}. This is the content of Theorem \ref{Global_Solution}.

We begin by introducing the vector function $U = (u,v,\varphi,\psi,z)^T$, where $\varphi(x,t) = u_t(x,t)$ and $\psi(x,t) = v_t(x,t)$. The system \eqref{EQ2.2}-\eqref{EQ2.3} can be written as
\begin{equation}\label{EQ2.17}
\left\{\begin{array}{ll}
U_{t} - \mathcal{A}(t) U= 0, \\
U(0) = U_0 = (u_0,v_0,u_1,v_1,f_0(\cdot,-,\tau(0)))^T,
\end{array}
\right.\quad
\end{equation}
where the operator $\mathcal{A}(t)$ is defined by
\begin{equation}\label{EQ2.18}
\mathcal{A}(t)\,U = \left(
\begin{array}{c}
\varphi(x,t) \\
\psi(x,t) \\
au_{xx}(x,t) - \mu_1(t)\varphi(x,t) - \mu_2(t)z(x,1,t) \\
bv_{xx}(x,t) \\
- \frac{1 - \tau'(t)\rho}{\tau(t)} z_{\rho}(x,\rho,t)
\end{array}
\right).
\end{equation}

Now, taking into account the conditions \eqref{EQ1.2}-\eqref{EQ1.3}, as well as previous results presented in \cite{Benseghir,LiuG,Li,Zitouni_Abdelouaheb_Zennir_Rachida}, we introduce the set
\begin{align*}
X_{*} = & \{ (u,v) \in H^1(\Omega) \times H^1(]L_1,L_2[)/ u(0)=u(L_3)=0, u(L_i)=v(L_i), au_x(L_i) = bv_x(L_i), i= 1,2 \}.
\end{align*}
We define the phase space as
$$\mathcal{H} = X_{*} \times L^2(\Omega) \times L^2(]L_1,L_2[) \times L^2(\Omega \times ]0,1[)$$
equipped with the inner product
\begin{align}\label{inner_product}
\langle U,\hat{U} \rangle_{\mathcal{H}} =  \int_{\Omega} \left( \varphi \hat{\varphi} + au_x \hat{u}_x \right)\,dx + \int_{L_1}^{L_2} \left( \psi \hat{\psi} + bv_x \hat{v}_x \right)\,dx  + \xi(t)\tau(t) \int_{\Omega} \int_{0}^{1} z \hat{z}\,d\rho\,dx,  
\end{align}
for $U = (u,v,\varphi,\psi,z)^T$ and $\hat{U} = (\hat{u},\hat{v},\hat{\varphi},\hat{\psi},\hat{z})^T$.

The domain $D(\mathcal{A}(t))$ of $\mathcal{A}(t)$ is defined by
\begin{align}\label{domain}
D(\mathcal{A}(t)) = \{ & (u,v,\varphi,\psi,z)^T \in \mathcal{H}/ (u,v) \in \left( H^2(\Omega) \times H^2(]L_1,L_2[) \right) \cap X_{*}, \\
& \varphi \in H^1(\Omega), \psi \in H^1(]L_1,L_2[), z \in L^2 \left( ]0,L[; H_0^1(]0,1[) \right), \varphi=z(\cdot,0) \}. \nonumber
\end{align}

Notice that the domain of the operator $\mathcal{A}(t)$ does not dependent on time $t$, i.e.,
\begin{equation}\label{DA(t)=DA(0)}
D(\mathcal{A}(t)) = D(\mathcal{A}(0)), \quad \forall t>0.
\end{equation}

A general theory for not autonomous operators given by equations of type \eqref{EQ2.17} has been developed using semigroup theory, see \cite{Kato_1}, \cite{Kato_3} and \cite{Pazy}. The simplest way to prove existence and uniqueness results is to show that the triplet $\left\{ (\mathcal{A}, \mathcal{H}, Y) \right\}$, with $\mathcal{A} = \left\{ \mathcal{A}(t)/ t \in [0,T] \right\}$, for some fixed $T>0$ and $Y=\mathcal{A}(0)$, forms a CD-systems (or constant domain system, see \cite{Kato_1} and \cite{Kato_3}). More precisely, the following theorem, which id due to Tosio Kato (Theorem 1:9 of \cite{Kato_1}) gives the existence and uniqueness results and is proved in Theorem $1.9$ of \cite{Kato_1} (see also Theorem $2.13$ of \cite{Kato_3} or \cite{Ali}). For convenience let states Kato's result here.

\begin{theorem}\label{Theorem_preliminar}
Assume that
\begin{enumerate}
\item[(i)] $Y=D(\mathcal{A}(0))$ is  dense subset of $\mathcal{H}$,
\item[(ii)] \eqref{DA(t)=DA(0)} holds,
\item[(iii)] for all $t \in [0,T]$, $\mathcal{A}(t)$ generates a strongly continuous semigroup on $\mathcal{H}$ and the family $\mathcal{A}(t) = \left\{ \mathcal{A}(t)/ t \in [0,T] \right\}$ is stable with stability constants $C$ and $m$ independent of $t$ (i.e., the semigroup $(S_t(s))_{s\geq 0}$ generated by $\mathcal{A}(t)$ satisfies $\| S_t(s)u \|_{\mathcal{H}} \leq Ce^{ms} \| u \|_{\mathcal{H}}$, for all $u \in \mathcal{H}$ and $s\geq 0$),
\item[(iv)] $\partial_t \mathcal{A}(t)$ belongs to $L_{*}^{\infty}([0,T],B(Y, \mathcal{H}))$, which is the space of equivalent classes of essentially bounded, strongly measurable functions from $[0,T]$ into the set $B(Y, \mathcal{H})$ of bounded linear operators from $Y$ into $\mathcal{H}$.

Then, problem \eqref{EQ2.17} has a unique solution $U \in C([0,T],Y) \cap C^1([0,T], \mathcal{H})$ for any initial datum in $Y$.
\end{enumerate}
\end{theorem}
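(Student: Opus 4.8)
The plan is to regard \eqref{EQ2.17} as an abstract non-autonomous Cauchy problem and to construct the associated evolution family (propagator) $\{U(t,s)\}_{0\le s\le t\le T}$ by time-discretization, reading off the solution as $U(t)=U(t,0)U_0$. Since $\mathcal{A}(t)$ genuinely depends on $t$, one cannot simply exponentiate a single generator; instead I would freeze the coefficients on a fine partition and pass to the limit. The four hypotheses are tailored to make exactly this work: (iii) supplies, for each frozen time, a semigroup together with \emph{uniform} stability bounds; (ii) guarantees that the approximants all live on the common space $Y$; (iv) provides the continuity of $t\mapsto\mathcal{A}(t)$ needed for convergence; and (i) lets me extend from $Y$ to all of $\mathcal{H}$.

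First I would fix a partition $0=t_0<t_1<\dots<t_N=T$ of mesh $\delta_n\to 0$ and define the step operator $\mathcal{A}_n(t)=\mathcal{A}(t_k)$ for $t\in[t_k,t_{k+1})$. On each subinterval hypothesis (iii) yields the semigroup $S_{t_k}(\cdot)$, and composing these across the partition defines the approximate propagator $U_n(t,s)$. Because the stability constants $C,m$ are independent of $t$ (this is the crucial uniformity in (iii)), one obtains the $n$-independent bound $\|U_n(t,s)\|_{\mathcal{H}}\le Ce^{m(t-s)}$, and, using (ii) together with the resolvent bounds underlying stability, a corresponding bound in the graph norm of $Y$.

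The core of the argument is the convergence of $\{U_n\}$. For $y\in Y$ I would use the telescoping identity
\begin{equation*}
U_m(t,s)y-U_n(t,s)y=\int_s^t U_n(t,\sigma)\bigl[\mathcal{A}_m(\sigma)-\mathcal{A}_n(\sigma)\bigr]U_m(\sigma,s)y\,d\sigma,
\end{equation*}
obtained by differentiating $\sigma\mapsto U_n(t,\sigma)U_m(\sigma,s)$. Here $U_m(\sigma,s)y$ stays bounded in $Y$, while $\mathcal{A}_m(\sigma)-\mathcal{A}_n(\sigma)$, viewed as an operator from $Y$ into $\mathcal{H}$, has norm dominated by the modulus of continuity of $\sigma\mapsto\mathcal{A}(\sigma)$ in $B(Y,\mathcal{H})$; by (iv) this tends to $0$ as $n,m\to\infty$. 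Hence $\{U_n(t,s)y\}$ is Cauchy in $\mathcal{H}$, and by the uniform bounds and the density (i) the limit $U(t,s)$ extends to a strongly continuous evolution family satisfying the composition identity $U(t,r)U(r,s)=U(t,s)$ and the same exponential bound.

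Finally, for $U_0\in Y$ I would show that $U(t):=U(t,0)U_0$ lies in $C([0,T],Y)\cap C^1([0,T],\mathcal{H})$ and satisfies $U'(t)=\mathcal{A}(t)U(t)$: the constant-domain property (ii) keeps $U(t)$ in $Y$, and (iv) promotes the strong differentiability of $U(t,s)$ in the $\mathcal{H}$-norm, so that $\partial_t U(t,s)y=\mathcal{A}(t)U(t,s)y$; uniqueness then follows from a Gr\"onwall estimate applied to the difference of two solutions, using the stability bound. The main obstacle is precisely this last regularity step, namely establishing that the limit propagator is strongly differentiable in $\mathcal{H}$ while preserving $Y$, which is where the interplay between the uniform stability in (iii), the time-independence of the domain in (ii), and the $B(Y,\mathcal{H})$-regularity of $\partial_t\mathcal{A}$ in (iv) becomes indispensable; the Cauchy estimate for $\{U_n\}$ measured in the $Y$-topology is the technically heaviest point.
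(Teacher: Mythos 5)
A point of context first: the paper does not prove this statement at all. It is Kato's theorem, reproduced verbatim for the reader's convenience and attributed to Theorem 1.9 of \cite{Kato_1} (see also Theorem 2.13 of \cite{Kato_3} and \cite{Pazy}); the paper's ``proof'' is the citation. So the only meaningful comparison is with the classical proof in those references, and your outline is in fact that proof: freeze the coefficients along a partition, compose the frozen semigroups into approximate propagators $U_n(t,s)$, use the $t$-uniformity of the stability constants in (iii) for $n$-independent bounds, prove convergence via the telescoping integral identity, and use (i), (ii), (iv) to identify the limit as an evolution family solving \eqref{EQ2.17} for data in $Y$. Structurally this is exactly Kato's product-approximation argument, so there is no divergence of method to report.

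As a self-contained argument, however, your sketch has one genuine gap, located precisely at the step you yourself call the heaviest. You assert that the uniform bound on $U_n(t,s)$ in the graph norm of $Y$ follows from ``(ii) together with the resolvent bounds underlying stability.'' It does not: stability in $\mathcal{H}$ plus constancy of the domain does not by itself give stability of the frozen semigroups on $Y$ with constants independent of the freezing times. What actually delivers this is hypothesis (iv). Since a generator commutes with its own semigroup, each $S_{t_k}(s)$ satisfies $\| (\lambda-\mathcal{A}(t_k))S_{t_k}(s)y \|_{\mathcal{H}} \leq Ce^{ms}\| (\lambda-\mathcal{A}(t_k))y \|_{\mathcal{H}}$, i.e.\ it is stable with the \emph{same} constants in the $t_k$-dependent graph norm $\| (\lambda-\mathcal{A}(t_k))\cdot \|_{\mathcal{H}}$; to convert this into stability in the fixed norm of $Y$ one needs these graph norms to be uniformly equivalent in $t$, and each switch of norm along the partition costs a factor $1+C|t_{k+1}-t_k|$ whose product stays bounded only because $\| (\mathcal{A}(t)-\mathcal{A}(s))y \|_{\mathcal{H}} \leq |t-s|\, \mathrm{ess\,sup}\, \| \partial_t \mathcal{A} \|_{B(Y,\mathcal{H})} \| y \|_{Y}$, which is exactly (iv). In other words, (iv) must be used twice --- once for the $Y$-stability (quasi-stability) of the approximants and once for the Cauchy estimate --- whereas your write-up invokes it only for the latter; this is Kato's $Y$-stability lemma for CD-systems in \cite{Kato_1} (see also Section 5.4 of \cite{Pazy}). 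Without it, the factor $\| U_m(\sigma,s)y \|_{Y}$ inside your telescoping integral has no uniform bound and the convergence argument collapses.
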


Using the time-dependent inner product \eqref{inner_product} and the Theorem \ref{Theorem_preliminar}  we get the following result
of existence and uniqueness of global solutions to the problem \eqref{EQ2.17}.

\begin{theorem}\label{Global_Solution}[Global solution]
For any initial datum $U_0 \in \mathcal{H}$ there exists a unique solution $U$ satisfying
$$
U \in C([0,+\infty[, \mathcal{H})
$$
for problem \eqref{EQ2.17}.

Moreover, if $U_0 \in D(\mathcal{A}(0))$, then
$$
U \in C([0,+\infty[, D(\mathcal{A}(0))) \cap C^1([0,+\infty[, \mathcal{H}).
$$
\end{theorem}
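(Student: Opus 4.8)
The plan is to verify the four hypotheses of Kato's Theorem \ref{Theorem_preliminar} for the family $\mathcal{A}(t)$ on $[0,T]$, and then pass to the global interval $[0,+\infty[$. Hypothesis (i) is a density argument: since $D(\mathcal{A}(0))$ contains the smooth vector functions satisfying the transmission and boundary conditions, it is dense in $\mathcal{H}$. Hypothesis (ii) is precisely \eqref{DA(t)=DA(0)}, which is already noted in the excerpt. The substance of the proof therefore lies in (iii) and (iv).

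For hypothesis (iii) I would fix $t$ and show that $\mathcal{A}(t)$ generates a $C_0$-semigroup via the Lumer--Phillips theorem, which requires two verifications with respect to the time-dependent inner product \eqref{inner_product}. First, dissipativity: for $U \in D(\mathcal{A}(t))$ I would compute $\langle \mathcal{A}(t)U, U\rangle_{\mathcal{H}}$ by integrating by parts, using the transmission conditions \eqref{EQ2.2.1} and boundary conditions \eqref{EQ2.2.2} to discard the boundary terms exactly as in the proof of Lemma \ref{Lemma_2.2}, and then applying Young's inequality to the delay coupling term; the hypotheses (H1), (H2) together with the constraint \eqref{hipotese_8} on $\bar{\xi}$ guarantee the resulting coefficients are nonnegative, so $\langle \mathcal{A}(t)U,U\rangle_{\mathcal{H}} \le \kappa \|U\|_{\mathcal{H}}^2$ for a suitable constant. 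Second, the range condition: I must show $\lambda I - \mathcal{A}(t)$ is surjective for some $\lambda>0$, i.e. that for any $F \in \mathcal{H}$ the resolvent equation $\lambda U - \mathcal{A}(t)U = F$ has a solution $U \in D(\mathcal{A}(t))$. This reduces, after eliminating $\varphi,\psi,z$, to an elliptic transmission problem for $(u,v)$ whose weak formulation is coercive on $X_{*}$ by Lax--Milgram. Because the inner product depends on $t$ only through the factor $\xi(t)\tau(t)$ in the delay term, and \eqref{hipot_2}, \eqref{hipotese_7} bound this factor above and below uniformly on $[0,T]$, the norms $\|\cdot\|_{\mathcal{H},t}$ are all equivalent with constants independent of $t$; this equivalence converts the single-$t$ generation estimate into the uniform stability with constants $C,m$ independent of $t$ that (iii) demands.

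Hypothesis (iv) requires that $t \mapsto \mathcal{A}(t)$ be differentiable into $B(Y,\mathcal{H})$ with essentially bounded derivative. Differentiating \eqref{EQ2.18} formally, $\partial_t \mathcal{A}(t)U$ involves $\mu_1'(t)\varphi$, $\mu_2'(t)z(\cdot,1)$ in the third component and $\partial_t\bigl(\tfrac{1-\tau'(t)\rho}{\tau(t)}\bigr)z_\rho$ in the fifth. The assumption $\tau \in W^{2,\infty}([0,T])$ from \eqref{hipot_1} together with the lower bound $\tau(t)\ge\tau_0$ from \eqref{hipot_2} bounds this last coefficient in $L^\infty$, while (H1) and the bound $|\mu_2'(t)|\le M_2\mu_1(t)$ in \eqref{H2_1} bound the first two; hence $\partial_t\mathcal{A}(t)$ maps $Y$ boundedly into $\mathcal{H}$ with norm essentially bounded on $[0,T]$. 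With all four hypotheses in hand, Kato's theorem yields a unique solution $U \in C([0,T],Y)\cap C^1([0,T],\mathcal{H})$ for $U_0 \in Y$. Since $T>0$ was arbitrary and the a priori bound from Lemma \ref{Lemma_2.2} shows the $\mathcal{H}$-norm stays finite on bounded time intervals, these local solutions patch together to give the global statement on $[0,+\infty[$; the case of general $U_0 \in \mathcal{H}$ follows by density together with the uniform stability estimate, which lets the solution operator extend continuously from $Y$ to all of $\mathcal{H}$.

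I expect the main obstacle to be hypothesis (iii), specifically extracting stability constants that are genuinely \emph{independent of $t$}. The dissipativity computation and the elliptic surjectivity are each routine, but one must be careful that the perturbation term arising from differentiating the time-dependent norm, which produces a factor like $\tfrac{\xi'(t)\tau(t)+\xi(t)\tau'(t)}{\xi(t)\tau(t)}$, is controlled uniformly; this is exactly where the monotonicity of $\xi(t)=\bar\xi\mu_1(t)$, the bounds \eqref{hipot_2} on $\tau$ and $\tau'$, and the hypothesis (H1) on $\mu_1'/\mu_1$ must be combined to keep the generation constant $m$ uniform over the interval.
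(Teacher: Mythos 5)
Your overall strategy coincides with the paper's: verify hypotheses (i)--(iv) of Theorem \ref{Theorem_preliminar}, with density of $D(\mathcal{A}(0))$, a quasi-dissipativity computation plus Lax--Milgram surjectivity for each fixed $t$, and an $L^\infty$ bound on $\partial_t\mathcal{A}(t)$ using \eqref{hipot_1}, \eqref{hipot_2}, (H1), (H2). (A minor packaging difference: the paper works with the shifted operator $\tilde{\mathcal{A}}(t)=\mathcal{A}(t)-\kappa(t)I$, $\kappa(t)=\sqrt{1+\tau'(t)^2}/(2\tau(t))$, applies Kato to $\tilde{\mathcal{A}}$, and recovers $U(t)=e^{\int_0^t\kappa(s)\,ds}\tilde{U}(t)$ at the end; your direct quasi-dissipative formulation is an acceptable variant of the same idea.)

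There is, however, a genuine gap in your verification of (iii), precisely at the point you yourself flag as the main obstacle. You claim that uniform equivalence of the norms $\|\cdot\|_t$ (which indeed follows from $\xi(t)\tau(t)$ being bounded above and below via \eqref{hipot_2} and \eqref{hipotese_7}) ``converts the single-$t$ generation estimate into the uniform stability.'' It does not. Stability of the family in Kato's sense concerns arbitrary finite products $S_{t_k}(s_k)\cdots S_{t_1}(s_1)$ with $0\le t_1\le\cdots\le t_k\le T$: each factor is quasi-contractive only in its \emph{own} norm $\|\cdot\|_{t_j}$, so every change of norm inside the product costs a factor. With mere uniform equivalence $c_1\|\cdot\|_{0}\le\|\cdot\|_{t}\le c_2\|\cdot\|_{0}$, the $k$ norm changes cost $(c_2/c_1)^k$, which is unbounded as $k\to\infty$, and no stability constants $C,m$ independent of the partition can be extracted. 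What Kato's Proposition 1.1 (the result the paper invokes) actually requires is the Lipschitz-exponential variation of the norms,
$$
\frac{\|\Phi\|_t}{\|\Phi\|_s}\le e^{\frac{c}{2\tau_0}|t-s|},\qquad t,s\in[0,T],
$$
i.e. the paper's estimate \eqref{norma}: with it the norm-change factors telescope, $\prod_j e^{\frac{c}{2\tau_0}|t_{j+1}-t_j|}=e^{\frac{c}{2\tau_0}(t_k-t_1)}\le e^{\frac{c}{2\tau_0}T}$, uniformly in $k$. The paper proves \eqref{norma} by writing $\tau(t)=\tau(s)+\tau'(r)(t-s)$ (mean value theorem) and using that $\xi$ is non-increasing, $\tau\ge\tau_0$ and $\tau'$ is bounded, so that $\xi(t)\tau(t)\le\xi(s)\tau(s)\bigl(1+\frac{c}{\tau_0}|t-s|\bigr)\le\xi(s)\tau(s)e^{\frac{c}{\tau_0}|t-s|}$. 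Your closing paragraph names exactly the right quantity (the logarithmic derivative of $\xi(t)\tau(t)$), so the repair is short; but as written, the body of your proof replaces the needed differential-in-$t$ control of the norm by a weaker static equivalence, and the stability hypothesis of Kato's theorem is left unproved.
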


\begin{proof}
Our goal is then to check the above assumptions for problem \eqref{EQ2.17}.
First, we show that $D(\mathcal{A}(0))$ is dense in $\mathcal{H}$.
The proof we will follow method used in \cite{Nicaise_Pignotti} with the necessary modification imposed by the nature of our problem.
Let $\hat{U} = (\hat{u},\hat{v},\hat{\varphi},\hat{\psi},\hat{z})^T \in \mathcal{H}$ be orthogonal to all elements of $D(\mathcal{A}(0))$, namely

\begin{align}\label{x_1}
0 = \langle U,\hat{U} \rangle_{\mathcal{H}} = & \int_{\Omega} \left( \varphi \hat{\varphi} + au_x \hat{u}_x \right)\,dx + \int_{L_1}^{L_2} \left( \psi \hat{\psi} + bv_x \hat{v}_x \right)\,dx + \xi(t)\tau(t) \int_{\Omega} \int_{0}^{1} z \hat{z}\,d\rho\,dx, 
\end{align}
for $U = (u,v,\varphi,\psi,z)^T \in D(\mathcal{A}(0))$.

We first take $u=v=\varphi=\psi=0$ and $z \in C_{0}^{\infty}\left(\Omega \times ]0,1[ \right)$. As $U=(0,0,0,0,z)^T \in D(\mathcal{A}(0))$ and therefore, from \eqref{x_1}, we deduce that
$$
\int_{\Omega} \int_0^1 z\hat{z}\,d\rho\,dx = 0.
$$
Since $C_{0}^{\infty}\left(\Omega \times ]0,1[ \right)$ is dense in $L^2\left(\Omega \times ]0,1[ \right)$, then, it follows then that $\hat{z}=0$. Similarly, let $\varphi \in C_{0}^{\infty}(\Omega)$, then $U=(0,0,\varphi,0,0)^T \in D(\mathcal{A}(0))$, which implies from \eqref{x_1} that
$$
\int_{\Omega} \varphi \hat{\varphi}\,dx  = 0.
$$
So, as above, it follows that $\hat{\varphi} = 0$. In the same way, by taking $\psi \in C_{0}^{\infty}(]L_1,L_2[)$, we get from \eqref{x_1}
$$
\int_{L_1}^{L_2} \psi \hat{\psi}\,dx  = 0
$$
and by density of $C_{0}^{\infty}(]L_1,L_2[)$ in $L^2(]L_1,L_2[)$, we obtain $\hat{\psi} = 0$.

Finally, for $(u,v) \in C_{0}^{\infty}\left(\Omega \times ]L_1,L_2[ \right)$ (then $(u_x,v_x) \in C_{0}^{\infty}\left(\Omega \times ]L_1,L_2[ \right)$) we obtain
$$
a\int_{\Omega} u_x\hat{u}_x\,dx + b\int_{L_1}^{L_2} v_x\hat{v}_x\,dx = 0.
$$
Since $C_{0}^{\infty} \left(\Omega \times ]L_1,L_2[ \right)$ is dense in $L^2 \left( \Omega \times ]L_1,L_2[ \right)$, we deduce that $\left( \hat{u}_x,\hat{v}_x \right)=(0,0)$ because $\left( \hat{u},\hat{v} \right) \in X_{*}$.

We consequently have
\begin{equation}\label{x_2}
D(\mathcal{A}(0)) \,\, \text{is dense in } \mathcal{H}.
\end{equation}

Now, we show that the operator $\mathcal{A}(t)$ generates a $C_0-$semigroup in $\mathcal{H}$ for a fixed $t$.

We calculate $\langle \mathcal{A}(t)U, U \rangle_t$ for a fixed $t$. Take $U=(u,v,\varphi,\psi,z)^T \in D(\mathcal{A}(t))$. Then
\begin{align*}
\langle \mathcal{A}(t)U, U \rangle_t = & - \mu_1(t) \int_{\Omega} \varphi^2\,dx - \mu_2(t) \int_{\Omega} z(x,1)\varphi\,dx - \frac{\xi(t)}{2} \int_{\Omega} \int_0^1 (1-\tau'(t)\rho)\dfrac{\partial}{\partial \rho} z^2(x,\rho)\,d\rho\,dx.
\end{align*}
Since
$$
\left(1- \tau'(t)\rho \right)\dfrac{\partial}{\partial \rho} z^2(x,\rho) = \dfrac{\partial}{\partial \rho} \left( \left(1- \tau'(t)\rho \right) z^2(x,\rho) \right) + \tau'(t)z^2(x,\rho),
$$
we have
\begin{align*}
\int_0^1 \left(1- \tau'(t)\rho \right)\dfrac{\partial}{\partial \rho} z^2(x,\rho)\,d\rho = & \left( 1- \tau'(t) \right)z^2(x,1) - z^2(x,0) + \tau'(t)\int_0^1 z^2(x,\rho)\,d\rho.
\end{align*}
Whereupon
\begin{align*}
\langle \mathcal{A}(t)U, U \rangle_t =& - \mu_1(t) \int_{\Omega} \varphi^2\,dx - \mu_2(t) \int_{\Omega} z(x,1)\varphi\,dx + \frac{\xi(t)}{2} \int_{\Omega} \varphi^2\,dx \\
& - \frac{\xi(t)(1-\tau'(t))}{2} \int_{\Omega} z^2(x,1)\,dx - \frac{\xi(t)\tau'(t)}{2} \int_{\Omega} \int_0^1 z^2(x,\rho)\,d\rho\,dx.
\end{align*}

Therefore, from \eqref{equ3}, we deduce

\begin{align*}
\langle \mathcal{A}(t)U, U \rangle_t \leq &- \mu_1(t) \left( 1-\frac{\bar{\xi}}{2}- \frac{\beta}{2\sqrt{1-d}} \right) \int_{\Omega} \varphi^2\,dx \\
&- \mu_1(t) \left( \frac{\bar{\xi}(1-\tau'(t))}{2}- \frac{\beta \sqrt{1-d}}{2} \right)\int_{\Omega} z^2(x,1)\,dx \\
&+ \frac{\xi(t)|\tau'(t)|}{2\tau(t)}\tau(t) \int_{\Omega}\int_0^1 z^2(x,\rho)\,d\rho\,dx.
\end{align*}
Then, we have

\begin{align*}
\langle \mathcal{A}(t)U, U \rangle_t \leq &- \mu_1(t) \left( 1-\frac{\bar{\xi}}{2}- \frac{\beta}{2\sqrt{1-d}} \right) \int_{\Omega} \varphi^2\,dx \\
&- \mu_1(t) \left( \frac{\bar{\xi}(1-\tau'(t))}{2}- \frac{\beta \sqrt{1-d}}{2} \right)\int_{\Omega} z^2(x,1)\,dx \\
&+ \kappa(t) \langle U, U \rangle_t,
\end{align*}
where
$$
\kappa(t) = \frac{\sqrt{1+\tau'(t)^2}}{2\tau(t)}.
$$

From \eqref{derivate_energy} we conclude that
\begin{equation}\label{dissip}
\langle \mathcal{A}(t)U, U \rangle_t - \kappa(t)\langle U,U \rangle_t \leq 0,
\end{equation}
which means that operator $\tilde{\mathcal{A}}(t) = \mathcal{A}(t) - \kappa(t) I$ is dissipative.

Now, we prove  the surjectivity of the operator $\lambda I - \mathcal{A}(t)$ for fixed $t>0$ and $\lambda >0$. For this purpose, let $F=(f_1,f_2,f_3,f_4,f_5)^T \in \mathcal{H}$. We seek $U=(u,v,\varphi,\psi,z)^T \in D(\mathcal{A}(t))$ which is solution of
$$
\left( \lambda I - \mathcal{A}(t) \right)U=F,
$$
that is, the entries of $U$ satisfy the system of equations
\begin{eqnarray}
\lambda u - \varphi = f_1, \label{q_1}\\
\lambda v - \psi = f_2, \label{q_2}\\
\lambda \varphi - au_{xx} + \mu_1(t)\varphi + \mu_2(t) z(x,1) = f_3, \label{q_3}\\
\lambda \psi - bv_{xx} = f_4, \label{q_4}\\
\lambda z + \frac{1 - \tau'(t)\rho}{\tau(t)}z_{\rho} = f_5. \label{q_5}
\end{eqnarray}

Suppose that we have found $u$ and $v$ with the appropriated regularity. Therefore, from \eqref{q_1} and \eqref{q_2} we have
\begin{eqnarray}
\varphi = \lambda u - f_1, \label{x_6.1} \\
\psi = \lambda v - f_2. \label{x_6.2}
\end{eqnarray}
It is clear that $\varphi \in H^1(\Omega)$ and $\psi \in H^1(]L_1,L_2[)$. Furthermore, if $\tau'(t) \neq 0$, following the same approach as in \cite{Nicaise_Pignotti}, we obtain
$$
z(x,\rho) = \varphi(x)e^{\sigma(\rho,t)} + \tau(t) e^{\sigma(\rho,t)} \int_{0}^{\rho} \frac{f_5(x,s)}{1-s\tau'(s)} e^{-\sigma(s,t)}\,ds,
$$
where
$$\sigma(\rho,t) = \lambda \frac{\tau(t)}{\tau'(t)}\ln(1- \rho \tau'(t)),$$
is solution of \eqref{q_5} satisfying
\begin{equation}\label{t_1}
z(x,0)=\varphi(x).
\end{equation}
Otherwise,
$$
z(x,\rho) = \varphi(x)e^{-\lambda \tau(t) \rho} + \tau(t)e^{-\lambda \tau(t) \rho} \int_{0}^{\rho} f_5(x,s)e^{\lambda \tau(t) s}\,ds
$$
is solution of \eqref{q_5} satisfying \eqref{t_1}. From now on, for pratical purposes we will consider $\tau'(t) \neq 0$ (the case $\tau'(t) = 0$ is analogous). Taking into account \eqref{x_6.1} we have
\begin{align}\label{x_7}
z(x,1) & = \varphi e^{\sigma(1,t)} + \tau(t) e^{\sigma(1,t)} \int_{0}^{1} \frac{f_5(x,s)}{1-s\tau'(s)} e^{-\sigma(s,t)}\,ds \\
& = \left( \lambda u - f_1 \right) e^{\sigma(1,t)} + \tau(t) e^{\sigma(1,t)} \int_{0}^{1} \frac{f_5(x,s)}{1-s\tau'(s)} e^{-\sigma(s,t)}\,ds \nonumber \\
& = \lambda u e^{\sigma(1,t)} - f_1 e^{\sigma(1,t)} + \tau(t) e^{\sigma(1,t)} \int_{0}^{1} \frac{f_5(x,s)}{1-s\tau'(s)} e^{-\sigma(s,t)}\,ds. \nonumber
\end{align}
Substituting \eqref{x_6.1} and \eqref{x_7} in \eqref{q_3}, and \eqref{x_6.2} in \eqref{q_4}, we obtain
\begin{equation}\label{x_11}
\left\{ \begin{array}{l}
\eta u - au_{xx} = g_1, \\
\lambda^2 v - b v_{xx} = g_2,
\end{array}\right.
\end{equation}
where
\begin{equation*}
\eta := \lambda^2 + \lambda \mu_1(t) + \lambda \mu_2(t) e^{\sigma(1,t)},
\end{equation*}
\begin{align*}
g_1 := & f_3 + \lambda f_1 + \mu_1(t) f_1 + \mu_2(t) f_1 e^{\sigma(1,t)} \\
& - \mu_2(t) \tau(t) e^{\sigma(1,t)} \int_{0}^{1} \frac{f_5(x,s)}{1-s\tau'(s)} e^{-\sigma(s,t)}\,ds,
\end{align*}
\begin{equation*}
g_2:= f_4 + \lambda f_2.
\end{equation*}
In order to solve \eqref{x_11}, we use a standard procedure, considering variational problem
\begin{equation}\label{variational_problem}
\Upsilon( (u,v), (\tilde{u}, \tilde{v}) ) = L(\tilde{u},\tilde{v}),
\end{equation}
where the bilinear form
\begin{eqnarray*}
\Upsilon: X_{*} \times X_{*} \rightarrow \mathbb{R}
\end{eqnarray*}
and the linear form
$$
L: X_{*} \rightarrow \mathbb{R}
$$
are defined by
\begin{align*}
\Upsilon( (u,v), (\tilde{u},\tilde{v}) ) = & \eta \int_{\Omega} u \tilde{u}\,dx + a \int_{\Omega} u_x \tilde{u}_x\,dx + \lambda^2 \int_{L_1}^{L_2} v \tilde{v}\,dx + b \int_{L_1}^{L_2} v_x \tilde{v}_x\,dx - a \left[ u_x \tilde{u} \right]_{\partial \Omega} - b \left[ v_x \tilde{v} \right]_{L_1}^{L_2}
\end{align*}
and
\begin{equation*}
L(\tilde{u},\tilde{v}) = \int_{\Omega} g_1 \tilde{u}\,dx + \int_{L_1}^{L_2} g_2 \tilde{v}\,dx.
\end{equation*}

It is easy to verify that $\Upsilon$ is continuous and coercive, and $L$ is continuous, so by applying the Lax-Milgram Theorem, we obtain a solution for $(u,v) \in X_{*}$ for \eqref{x_11}. In addition, it follows from \eqref{q_3} and \eqref{q_4} that $(u,v) \in H^2(\Omega) \times H^2(]L_1,L_2[)$ and so $(u,v,\varphi,\psi,z) \in D(\mathcal{A}(t))$.

Therefore, the operator $\lambda I - \mathcal{A}(t)$ is surjective for any $\lambda > 0$ and $t>0$. Again as $\kappa(t)>0$, this prove that
\begin{equation}\label{lambdaI-tildeA_surjective}
\lambda I - \tilde{\mathcal{A}}(t) = \left( \lambda + \kappa(t) \right)I - \mathcal{A}(t) \ \text{is surjective}
\end{equation}
for any $\lambda >0$ and $t>0$.

To complete the proof of (iii), it suffices to show that
\begin{equation}\label{norma}
\dfrac{\| \Phi \|_t}{\| \Phi \|_s} \leq e^{\frac{c}{2\tau_0}|t-s|}, \quad  t,s \in [0,T],
\end{equation}
where $\Phi = (u,v,\varphi,\psi,z)^T$, $c$ is a positive constant and $\| \cdot \|$ is the norm associated the inner product \eqref{inner_product}. For all $t,s \in [0,T]$, we have
\begin{align*}
\| \Phi \|_t^2 - \| \Phi \|_s^2 e^{\frac{c}{\tau_0}|t-s|} = & \left(1 - e^{\frac{c}{\tau_0}|t-s|} \right) \left[ \int_{\Omega} \left( \varphi^2 + au_x^2 \right)dx + \int_{L_1}^{L_2} \left( \psi^2 + bv_x^2 \right)dx \right] \\
&+ \left( \xi(t)\tau(t) - \xi(s)\tau(s)e^{\frac{c}{\tau_0}|t-s|} \right) \int_{\Omega} \int_0^1 z^2(x,\rho)\,d\rho\,dx.
\end{align*}
It is clear that $1 - e^{\frac{c}{\tau_0}|t-s|} \leq 0$. Now we will prove $\xi(t)\tau(t) - \xi(s)\tau(s)e^{\frac{c}{\tau_0}|t-s|} \leq 0$ for some $c>0$. In order to do this , first observe that
$$
\tau(t) = \tau(s) + \tau'(r)(t-s),
$$
for some $r \in (s,t)$. Since $\xi$ is a non increasing function and $\xi>0$, we get
$$
\xi(t)\tau(t) \leq \xi(s)\tau(s) + \xi(s)\tau'(r)(t-s),
$$
which implies
$$
\dfrac{\xi(t)\tau(t)}{\xi(s)\tau(s)} \leq 1 + \dfrac{|\tau'(r)|}{\tau(s)}|t-s|.
$$
Using \eqref{hipot_1} and that $\tau'$ is bounded, we deduce
$$
\frac{\xi(t)\tau(t)}{\xi(s)\tau(s)} \leq 1 + \frac{c}{\tau_0}|t-s| \leq e^{\frac{c}{\tau_0}|t-s|},
$$
which proves \eqref{norma} and therefore $(iii)$ follows.

Moreover, as $\kappa'(t) = \frac{\tau'(t)\tau''(t)}{2\tau(t)\sqrt{1+\tau'(t)^2}} - \frac{\tau'(t)\sqrt{1+\tau'(t)^2}}{2\tau(t)^2}$ is bounded on $[0,T]$ for all $T>0$ (by \eqref{hipot_1} and \eqref{hipotese_8}) we have
$$
\frac{d}{dt}\mathcal{A}(t)U = \left( \begin{array}{c}
0 \\
0 \\
-\mu_1'(t) \varphi - \mu_2'(t)z(\cdot,1) \\
0 \\
\frac{\tau''(t)\tau(t)\rho-\tau'(t)(\tau'(t)\rho-1)}{\tau(t)^2}z_{\rho}
\end{array} \right),
$$
with $\frac{\tau''(t)\tau(t)\rho-\tau'(t)(\tau'(t)\rho-1)}{\tau(t)^2}$ bounded on $[0,T]$ by \eqref{hipot_1} and \eqref{hipotese_8}. Thus
\begin{equation}\label{derivate_tilde_A}
\frac{d}{dt}\tilde{\mathcal{A}}(t) \in L_{*}^{\infty}([0,T], B(D(\mathcal{A}(0)), \mathcal{H})),
\end{equation}
where $L_{*}^{\infty}([0,T], B(D(\mathcal{A}(0)), \mathcal{H}))$ is the space of equivalence classes of essentially bounded, strongly measurable functions from $[0,T]$ into $B(D(\mathcal{A}(0)), \mathcal{H})$. Here $B(D(\mathcal{A}(0)), \mathcal{H})$ is the set of bounded linear operators from $D(\mathcal{A}(0))$ into $\mathcal{H}$.

Then, \eqref{dissip}, \eqref{lambdaI-tildeA_surjective} and \eqref{norma} imply that the family $\tilde{\mathcal{A}} = \left\{ \tilde{\mathcal{A}}(t): t \in [0,T] \right\}$ is a stable family of generators in $\mathcal{H}$ with stability constants independent of $t$, by Proposition $1.1$ from \cite{Kato_1}. Therefore, the assumptions $(i)-(iv)$ of Theorem \ref{Theorem_preliminar} are verified by \eqref{DA(t)=DA(0)}, \eqref{x_2}, \eqref{dissip}, \eqref{lambdaI-tildeA_surjective}, \eqref{norma} and \eqref{derivate_tilde_A}. Thus, the problem
\begin{equation}
\left\{\begin{array}{ll}
\tilde{U}_{t} = \tilde{\mathcal{A}}(t) \tilde{U},\\
\tilde{U}(0) = U_0
\end{array}
\right.\quad
\end{equation}
has a unique solution $\tilde{U} \in C\left( [0,+\infty[, D(\mathcal{A}(0)) \right) \cap C^1\left( [0,+\infty[, \mathcal{H} \right)$ for $U_0 \in D(\mathcal{A}(0))$.
The requested solution of \eqref{EQ2.17} is then given by
$$
U(t) = e^{\int_0^t \kappa(s)\,ds}\tilde{U}(t)
$$
because
\begin{align*}
U_t(t) &= \kappa(t)e^{\int_0^t \kappa(s)\,ds}\tilde{U}(t) + e^{\int_0^t \kappa(s)\,ds}\tilde{U}_t(t) \\
&= e^{\int_0^t \kappa(s)\,ds} \left( \kappa(t) + \tilde{\mathcal{A}}(t) \right) \tilde{U}(t) \\
&= \mathcal{A}(t)e^{\int_0^t \kappa(s)\,ds}\tilde{U}(t) \\
&=\mathcal{A}(t) U(t)
\end{align*}
which concludes the proof.
\end{proof}

\section{Exponential stability}\label{sec:Exponential_stability}

This section is dedicated to study of the asymptotic behavior. The main goal of this section is to study the stability of solutions to the system \eqref{EQ2.2}-\eqref{EQ2.3}. This is the content of Theorem \ref{Principal} where we show that the solution of problem \eqref{EQ2.2}-\eqref{EQ2.3} is exponentially stable. Our effort consists in building a suitable Lyapunov functional by the energy method. For this goal we present several technical lemmas.

\begin{lemma}
Let $(u,v,z)$ be a solution of \eqref{EQ2.2}-\eqref{EQ2.3}, then for any $\varepsilon_1 > 0$ and $c_1$ is the Poincar\'e's constant, we have the estimate
\begin{align}\label{y_2}
\frac{d}{dt}\mathcal{I}_1(t) \leq & - \left( a - \mu_1^2(0) c_1^2 \varepsilon_1  \right) \int_{\Omega} u_x^2\,dx - b \int_{L_1}^{L_2} v_x^2\,dx \\
& + \left( 1 + \frac{1}{2\varepsilon_1} \right) \int_{\Omega} u_t^2\,dx + \int_{L_1}^{L_2} v_t^2\,dx + \frac{\beta^2}{2 \varepsilon_1} \int_{\Omega} z^2(x,1,t)\,dx, \nonumber
\end{align}
where
\begin{equation}\label{y_1}
\mathcal{I}_1(t) = \int_{\Omega} uu_t\,dx + \int_{L_1}^{L_2} vv_t\,dx.
\end{equation}
\end{lemma}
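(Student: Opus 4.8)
The plan is to differentiate the functional $\mathcal{I}_1$ directly and eliminate the second time derivatives using the equations of motion. Differentiating \eqref{y_1} I would obtain
\begin{equation*}
\frac{d}{dt}\mathcal{I}_1(t) = \int_{\Omega}\left(u_t^2 + uu_{tt}\right)dx + \int_{L_1}^{L_2}\left(v_t^2 + vv_{tt}\right)dx,
\end{equation*}
and then substitute $u_{tt} = au_{xx} - \mu_1(t)u_t - \mu_2(t)z(x,1,t)$ together with $v_{tt} = bv_{xx}$ from \eqref{EQ2.2}. Integrating by parts the terms $a\int_{\Omega}uu_{xx}\,dx$ and $b\int_{L_1}^{L_2}vv_{xx}\,dx$ converts them into $-a\int_{\Omega}u_x^2\,dx$ and $-b\int_{L_1}^{L_2}v_x^2\,dx$ plus boundary contributions at the four endpoints.

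The step I expect to carry the structural weight is the cancellation of those boundary terms. Since $\Omega = ]0,L_1[\cup]L_2,L_3[$ is disconnected, I must keep track of $0,L_1,L_2,L_3$ with the correct orientation on each component. The contributions at $x=0$ and $x=L_3$ vanish by the Dirichlet conditions \eqref{EQ2.2.2}, while those collected at $L_1$ and $L_2$ have the form $au(L_i)u_x(L_i)$ and $bv(L_i)v_x(L_i)$. Invoking the transmission conditions \eqref{EQ2.2.1}, namely $u(L_i)=v(L_i)$ and $au_x(L_i)=bv_x(L_i)$, I can rewrite $au(L_i)u_x(L_i) = v(L_i)\,bv_x(L_i) = bv(L_i)v_x(L_i)$, so the boundary contributions cancel in pairs. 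After this cancellation the identity reduces to
\begin{align*}
\frac{d}{dt}\mathcal{I}_1(t) = & \int_{\Omega}u_t^2\,dx - a\int_{\Omega}u_x^2\,dx - \mu_1(t)\int_{\Omega}uu_t\,dx - \mu_2(t)\int_{\Omega}uz(x,1,t)\,dx \\
& + \int_{L_1}^{L_2}v_t^2\,dx - b\int_{L_1}^{L_2}v_x^2\,dx.
\end{align*}

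It then remains to absorb the two damping integrals by Young's inequality. For $-\mu_1(t)\int_{\Omega}uu_t\,dx$ I would use the weight $\mu_1(t)\varepsilon_1$, generating $\frac{\mu_1^2(t)\varepsilon_1}{2}\int_{\Omega}u^2\,dx + \frac{1}{2\varepsilon_1}\int_{\Omega}u_t^2\,dx$; for $-\mu_2(t)\int_{\Omega}uz(x,1,t)\,dx$ I would first apply $|\mu_2(t)|\le\beta\mu_1(t)$ from \textbf{(H2)} and then Young with a weight tuned so that the delay term receives the coefficient $\frac{\beta^2}{2\varepsilon_1}$, producing $\frac{\mu_1^2(t)\varepsilon_1}{2}\int_{\Omega}u^2\,dx + \frac{\beta^2}{2\varepsilon_1}\int_{\Omega}z^2(x,1,t)\,dx$. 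Adding the two $\int_{\Omega}u^2\,dx$ contributions gives $\mu_1^2(t)\varepsilon_1\int_{\Omega}u^2\,dx$; I then use Poincar\'e's inequality $\int_{\Omega}u^2\,dx \le c_1^2\int_{\Omega}u_x^2\,dx$ (available on each component of $\Omega$ by \eqref{EQ2.2.2}) and the monotonicity $\mu_1(t)\le\mu_1(0)$ from \textbf{(H1)} to bound it by $\mu_1^2(0)c_1^2\varepsilon_1\int_{\Omega}u_x^2\,dx$. Collecting the coefficient of $\int_{\Omega}u_x^2\,dx$ as $-(a-\mu_1^2(0)c_1^2\varepsilon_1)$ and of $\int_{\Omega}u_t^2\,dx$ as $1+\frac{1}{2\varepsilon_1}$ gives exactly \eqref{y_2}. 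The genuinely delicate point is the boundary cancellation, where the disconnectedness of $\Omega$ and the joint use of both transmission conditions are essential; the remaining manipulations are routine applications of Young's and Poincar\'e's inequalities.
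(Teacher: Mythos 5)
Your proposal is correct and follows essentially the same route as the paper: differentiate $\mathcal{I}_1$, substitute the equations, integrate by parts with the boundary terms cancelling via the Dirichlet and transmission conditions, then apply (H1)--(H2), the componentwise Poincar\'e inequality, and Young's inequality with the same weights. The only cosmetic differences are that you make the boundary cancellation explicit (the paper leaves it implicit in its first display) and you invoke the monotonicity $\mu_1(t)\le\mu_1(0)$ after Young's inequality rather than before, which changes nothing.
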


\begin{proof}
Differentiating $\mathcal{I}_1(t)$ and using \eqref{EQ2.2}, we obtain
\begin{align*}
\frac{d}{dt}\mathcal{I}_1(t) & = \int_{\Omega} u_t^2\,dx - a \int_{\Omega} u_x^2\,dx - \mu_1(t) \int_{\Omega} u u_t\,dx - \mu_2(t) \int_{\Omega} u z(x,1,t)\,dx + \int_{L_1}^{L_2} v_t^2\,dx - b \int_{L_1}^{L_2} v_x^2\,dx \nonumber \\
&\leq \int_{\Omega} u_t^2\,dx - a \int_{\Omega} u_x^2\,dx + \left| \mu_1(t) \int_{\Omega} u u_t\,dx \right| + \left| \mu_2(t) \int_{\Omega} u z(x,1,t)\,dx \right| + \int_{L_1}^{L_2} v_t^2\,dx - b \int_{L_1}^{L_2} v_x^2\,dx.
\end{align*}
From hypothesis (H1) and (H2), we have
\begin{align}\label{s_1}
\frac{d}{dt}\mathcal{I}_1(t) \leq & \int_{\Omega} u_t^2\,dx - a \int_{\Omega} u_x^2\,dx + \mu_1(0) \left| \int_{\Omega} u u_t\,dx \right| \\
& + \beta \mu_1(0) \left| \int_{\Omega} u z(x,1,t)\,dx \right| + \int_{L_1}^{L_2} v_t^2\,dx - b \int_{L_1}^{L_2} v_x^2\,dx. \nonumber
\end{align}
By using the conditions \eqref{EQ2.2.1} and \eqref{EQ2.2.2}, we obtain
\begin{gather*}
u^2(x,t) = \left( \int_0^x u_x(s,t)\,ds \right)^2 \leq L_1 \int_{0}^{L_1} u_x^2(x,t)\,dx, \quad x \in [0,L_1], \\
u^2(x,t) \leq (L_3 - L_2) \int_{L_2}^{L_3} u_x^2(x,t)\,dx, \quad x \in [L_2,L_3],
\end{gather*}
which imply the following Poincar\'e's inequality
\begin{equation}\label{Poinc_1}
\int_{\Omega} u^2(x,t)\,dx \leq c_1^2 \int_{\Omega} u_x^2\,dx, \quad x \in \Omega,
\end{equation}
where $c_1 = \max \{ L_1, L_3 - L_2 \}$ is the Poincar\'e's constant. Using Young's inequality and \eqref{Poinc_1}, we have
\begin{equation}\label{s_2}
\mu_1(0) \left| \int_{\Omega} u u_t\,dx \right| \leq \frac{\varepsilon_1 \mu_1^2(0) c_1^2}{2} \int_{\Omega} u_x^2\,dx + \frac{1}{2\varepsilon_1} \int_{\Omega} u_t^2\,dx
\end{equation}
and
\begin{equation}\label{s_3}
\beta \mu_1(0)  \left| \int_{\Omega} u z(x,1,t)\,dx \right| \leq \frac{\varepsilon_1 \mu_1^2(0) c_1^2}{2} \int_{\Omega} u_x^2\,dx + \frac{\beta^2}{2\varepsilon_1} \int_{\Omega} z^2(x,1,t)\,dx.
\end{equation}
Substituting \eqref{s_2} and \eqref{s_3} in \eqref{s_1} we conclude the lemma.
\end{proof}

Now, inspired by \cite{Marzocchi_Naso_Rivera}, we introduce the functional
\begin{equation}
q(x) = \left\{ \begin{array}{lc}
x - \dfrac{L_1}{2}, & x \in [0, L_1], \\ \\
\dfrac{L_2 -L_3 -L_1}{2(L_2 - L_1)}(x - L_1) + \dfrac{L_1}{2}, & x \in [L_1, L_2], \\ \\
x - \dfrac{L_2 + L_3}{2}, & x \in [L_2, L_3].
\end{array} \right.
\end{equation}
It is easy to see that $q(x)$ is bounded, i.e., $\left| q(x) \right| \leq M$, where
$$
M = \max \left\{ \frac{L_1}{2}, \frac{L_3 - L_2}{2} \right\}.
$$
We have the following result.

\begin{lemma}
Let $(u,v,z)$ be a solution of \eqref{EQ2.2}-\eqref{EQ2.3}, then for any $\varepsilon_2 > 0$, the following estimates holds true
\begin{align}\label{y_4}
\frac{d}{dt}\mathcal{I}_2(t) \leq & \left( \frac{1}{2} + \frac{1}{2\varepsilon_2} \right) \int_{\Omega} u_t^2\,dx + \left( \frac{a}{2} + M^2 \mu_1^2(0) \varepsilon_2  \right) \int_{\Omega} u_x^2\,dx + \frac{\beta^2}{2 \varepsilon_2} \int_{\Omega} z^2(x,1,t)\,dx \\
& - \frac{1}{4} \left[ L_1 u_t^2(L_1,t) + (L_3-L_2)u_t^2(L_2,t) \right] - \frac{a}{4} \left[ L_1 u_x^2(L_1,t) + (L_3-L_2)u_x^2(L_2,t) \right], \nonumber
\end{align}
and
\begin{align}\label{y_5}
\frac{d}{dt}\mathcal{I}_3(t) = & \frac{L_2 - L_3 - L_1}{4(L_2 - L_1)} \left( \int_{L_1}^{L_2} v_t^2\,dx + b \int_{L_1}^{L_2} v_x^2\,dx \right) + \frac{1}{4} \left[ L_1 v_t^2(L_1,t) + (L_3-L_2)v_t^2(L_2,t) \right] \\
& + \frac{b}{4} \left[ L_1 v_x^2(L_1,t) + (L_3-L_2)v_x^2(L_2,t) \right], \nonumber
\end{align}
where
\begin{equation}\label{y_3}
\mathcal{I}_2(t) = - \int_{\Omega} q(x)u_x u_t\,dx \quad \mbox{and} \quad \mathcal{I}_3(t) = - \int_{L_1}^{L_2} q(x)v_x v_t\,dx.
\end{equation}
\end{lemma}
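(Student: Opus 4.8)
The plan is to differentiate each multiplier functional, substitute the equations of motion \eqref{EQ2.2}, and reduce every resulting term by integration by parts in $x$, keeping careful track of the boundary contributions. For $\mathcal{I}_2(t)$ I first compute
\[
\frac{d}{dt}\mathcal{I}_2(t) = -\int_\Omega q(x)u_{xt}u_t\,dx - \int_\Omega q(x)u_x u_{tt}\,dx,
\]
and replace $u_{tt}$ by $au_{xx} - \mu_1(t)u_t - \mu_2(t)z(x,1,t)$ using the first equation of \eqref{EQ2.2}. This splits the second integral into an elastic part $-a\int_\Omega q u_x u_{xx}\,dx$, a damping part $+\mu_1(t)\int_\Omega q u_x u_t\,dx$, and a delay part $+\mu_2(t)\int_\Omega q u_x z(x,1,t)\,dx$.

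For the two conservative pieces I would write $u_{xt}u_t = \tfrac12\partial_x(u_t^2)$ and $u_x u_{xx} = \tfrac12\partial_x(u_x^2)$ and integrate by parts. On each component of $\Omega = ]0,L_1[\cup]L_2,L_3[$ one has $q'(x) = 1$, which produces the interior terms $\tfrac12\int_\Omega u_t^2\,dx$ and $\tfrac{a}{2}\int_\Omega u_x^2\,dx$. The boundary terms are evaluated using $q(0)=-\tfrac{L_1}{2}$, $q(L_1)=\tfrac{L_1}{2}$, $q(L_2)=-\tfrac{L_3-L_2}{2}$, $q(L_3)=\tfrac{L_3-L_2}{2}$. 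Since $u_t(0,t)=u_t(L_3,t)=0$ by \eqref{EQ2.2.2}, the velocity boundary terms survive only at $L_1$ and $L_2$ and give exactly $-\tfrac14[L_1 u_t^2(L_1,t) + (L_3-L_2)u_t^2(L_2,t)]$; for the flux the values at $0$ and $L_3$ contribute with a favourable (nonpositive) sign and may simply be discarded, leaving $-\tfrac{a}{4}[L_1 u_x^2(L_1,t)+(L_3-L_2)u_x^2(L_2,t)]$.

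The damping and delay integrals are estimated with $|q(x)|\le M$, the monotonicity of $\mu_1$ (so $\mu_1(t)\le\mu_1(0)$), and the bound $|\mu_2(t)|\le\beta\mu_1(t)$ from (H2), followed by Young's inequality with parameter $\varepsilon_2$: applying $ab\le\tfrac{\varepsilon_2}{2}a^2+\tfrac{1}{2\varepsilon_2}b^2$ with $a=\mu_1(0)M u_x$ yields $\tfrac{\varepsilon_2}{2}M^2\mu_1^2(0)\int_\Omega u_x^2\,dx$ from each of the two terms, together with $\tfrac{1}{2\varepsilon_2}\int_\Omega u_t^2\,dx$ from the damping term and $\tfrac{\beta^2}{2\varepsilon_2}\int_\Omega z^2(x,1,t)\,dx$ from the delay term. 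Collecting these with the conservative contributions gives \eqref{y_4}.

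For $\mathcal{I}_3(t)$ the computation is strictly analogous but simpler, since the elastic equation $v_{tt}=bv_{xx}$ carries no damping or delay; hence no Young's inequality is needed and the result is an exact identity \eqref{y_5}. Here integration by parts runs over the single interval $]L_1,L_2[$, on which $q'(x)=\tfrac{L_2-L_3-L_1}{2(L_2-L_1)}$ is constant, producing the coefficient $\tfrac{L_2-L_3-L_1}{4(L_2-L_1)}$ in front of $\int_{L_1}^{L_2}(v_t^2+bv_x^2)\,dx$, while the endpoint values $q(L_1)=\tfrac{L_1}{2}$ and $q(L_2)=-\tfrac{L_3-L_2}{2}$ generate the boundary terms at $L_1$ and $L_2$ with the stated signs. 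I expect the only real pitfall to be the bookkeeping of the boundary terms: the sign of $q$ at each endpoint and the decision to retain the $L_1,L_2$ contributions, which are kept precisely because they will be combined with the matching boundary terms of \eqref{y_5} through the transmission conditions \eqref{EQ2.2.1} in the subsequent Lyapunov analysis, while the nonpositive contributions at $0$ and $L_3$ are discarded.
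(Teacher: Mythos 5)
Your proposal is correct and follows essentially the same route as the paper's proof: differentiate each functional, substitute the equations from \eqref{EQ2.2}, integrate by parts using $q'\equiv 1$ on $\Omega$ and $q'\equiv\frac{L_2-L_3-L_1}{2(L_2-L_1)}$ on $]L_1,L_2[$, evaluate the boundary terms with the endpoint values of $q$ (keeping the $L_1,L_2$ contributions, dropping the sign-favourable flux terms at $0$ and $L_3$), and close the estimate for $\mathcal{I}_2$ via (H1), (H2), $|q|\le M$ and Young's inequality with parameter $\varepsilon_2$. Your boundary bookkeeping and the resulting coefficients match \eqref{y_4} and \eqref{y_5} exactly, so there is nothing to correct.
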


\begin{proof}
Differentiating $\mathcal{I}_2(t)$ and using \eqref{EQ2.2}, we obtain
\begin{align*}
\frac{d}{dt}\mathcal{I}_2(t) = & -\int_{\Omega} q(x)u_{xt} u_t\,dx - a \int_{\Omega} q(x)u_{xx} u_x\,dx \\
& + \mu_1(t) \int_{\Omega} q(x)u_{x} u_t\,dx + \mu_2(t) \int_{\Omega} q(x)u_{x} z(x,1,t)\,dx.
\end{align*}
Integrating by parts and considering the hypothesis (H1) and (H2), we have
\begin{align}\label{y_6}
\frac{d}{dt}\mathcal{I}_2(t) & \leq \frac{1}{2} \int_{\Omega} q'(x)u_t^2\,dx - \frac{1}{2} \left[ q(x) u_t^2 \right]_{\partial \Omega} + \frac{a}{2} \int_{\Omega} q'(x)u_x^2\,dx - \frac{a}{2} \left[ q(x) u_x^2 \right]_{\partial \Omega} \\
&\quad + \mu_1(0) \left| \int_{\Omega} q(x)u_{x} u_t\,dx \right| + \beta \mu_1(0) \left| \int_{\Omega} q(x)u_{x} z(x,1,t)\,dx \right| \nonumber \\
& \leq \frac{1}{2} \int_{\Omega} u_t^2\,dx - \frac{1}{2} \left[ q(x) u_t^2 \right]_{\partial \Omega} + \frac{a}{2} \int_{\Omega} u_x^2\,dx - \frac{a}{2} \left[ q(x) u_x^2 \right]_{\partial \Omega} \nonumber \\
&\quad + \mu_1(0)M \left| \int_{\Omega} u_{x} u_t\,dx \right| + \beta \mu_1(0) M \left| \int_{\Omega} u_{x} z(x,1,t)\,dx \right|. \nonumber
\end{align}
On the other hand, by using the boundary conditions \eqref{EQ2.2.2}, we have
\begin{gather*}
\frac{1}{2} \left[ q(x) u_t^2 \right]_{\partial \Omega} = \frac{1}{4} \left[ L_1 u_t^2(L_1,t) + (L_3-L_2) u_t^2(L_2,t) \right], \\
-\frac{a}{2} \left[ q(x) u_x^2 \right]_{\partial \Omega} \leq -\frac{a}{4} \left[ L_1 u_x^2(L_1,t) + (L_3-L_2) u_x^2(L_2,t) \right].
\end{gather*}
Inserting the above two equalities into \eqref{y_6} and by Young's inequality, we conclude that \eqref{y_6} gives \eqref{y_4}.

By the same argument, taking the derivative of $\mathcal{I}_3(t)$, we obtain
\begin{align*}
\frac{d}{dt}\mathcal{I}_3(t) & = \frac{1}{2} \int_{L_1}^{L_2} q'(x)v_t^2\,dx - \frac{1}{2} \left[ q(x) v_t^2 \right]_{L_1}^{L_2} + \frac{b}{2} \int_{L_1}^{L_2} q'(x)v_x^2\,dx - \frac{b}{2} \left[ q(x) v_x^2 \right]_{L_1}^{L_2} \\
& = \frac{L_2 - L_3 - L_1}{4(L_2 - L_1)} \left( \int_{L_1}^{L_2} v_t^2\,dx + b \int_{L_1}^{L_2} v_x^2\,dx \right) + \frac{1}{4} \left[ L_1 v_t^2(L_1,t) + (L_3-L_2)v_t^2(L_2,t) \right] \nonumber \\
&\quad + \frac{b}{4} \left[ L_1 v_x^2(L_1,t) + (L_3-L_2)v_x^2(L_2,t) \right]
\end{align*}
Hence, the proof is complete.
\end{proof}

As in \cite{Kirane_Said_Anwar}, taking into account the last lemma, we introduce the functional
\begin{equation}\label{y_7}
\mathcal{J}(t) = \bar{\xi} \tau(t) \int_{\Omega} \int_0^1 e^{-2\tau(t) \rho} z^2(x,\rho,t)\,d\rho\,dx.
\end{equation}

For this functional we have the following estimate.

\begin{lemma}[{\cite[Lemma~3.7]{Kirane_Said_Anwar}}]
Let $(u,v,z)$ be a solution of \eqref{EQ2.2}-\eqref{EQ2.3}. Then the functional $\mathcal{J}(t)$ satisfies
\begin{equation}\label{y_8}
\frac{d}{dt}\mathcal{J}(t) \leq -2 \mathcal{J}(t) + \bar{\xi} \int_{\Omega} u_t^2\,dx.
\end{equation}
\end{lemma}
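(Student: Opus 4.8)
The plan is to differentiate $\mathcal{J}(t)$ directly, eliminate $z_t$ via the transport equation for $z$, and then integrate by parts in $\rho$. Since $t$ enters $\mathcal{J}$ in three places---the prefactor $\tau(t)$, the weight $e^{-2\tau(t)\rho}$, and the unknown $z^2(x,\rho,t)$---the derivative splits into three groups:
\begin{align*}
\frac{d}{dt}\mathcal{J}(t) = & \ \bar{\xi}\tau'(t)\int_{\Omega}\int_0^1 e^{-2\tau(t)\rho}z^2\,d\rho\,dx
- 2\bar{\xi}\tau(t)\tau'(t)\int_{\Omega}\int_0^1 \rho\, e^{-2\tau(t)\rho}z^2\,d\rho\,dx \\
& + 2\bar{\xi}\tau(t)\int_{\Omega}\int_0^1 e^{-2\tau(t)\rho} z z_t\,d\rho\,dx.
\end{align*}
First I would use the third equation of \eqref{EQ2.2}, namely $\tau(t)z_t=-(1-\tau'(t)\rho)z_\rho$, to rewrite the last integral. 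Writing $2zz_\rho=\partial_\rho(z^2)$, it becomes $-\bar{\xi}\int_{\Omega}\int_0^1 e^{-2\tau(t)\rho}(1-\tau'(t)\rho)\,\partial_\rho(z^2)\,d\rho\,dx$.

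Next I would integrate by parts in $\rho$ with the weight $w(\rho)=e^{-2\tau(t)\rho}(1-\tau'(t)\rho)$. This yields the boundary contribution $-\bar{\xi}\int_{\Omega}\bigl[w(1)z^2(x,1,t)-w(0)z^2(x,0,t)\bigr]dx$ together with the interior term $+\bar{\xi}\int_{\Omega}\int_0^1 w'(\rho)z^2\,d\rho\,dx$, where $w'(\rho)=e^{-2\tau(t)\rho}\bigl[-2\tau(t)(1-\tau'(t)\rho)-\tau'(t)\bigr]$.

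The key step, and the only place requiring care, is to collect the three interior integrals. Adding $w'(\rho)$ to the coefficients of the first two terms above, the contributions proportional to $\tau'(t)$ and those proportional to $\rho\,\tau(t)\tau'(t)$ cancel pairwise, leaving exactly $-2\bar{\xi}\tau(t)e^{-2\tau(t)\rho}$ as the integrand---that is, precisely $-2\mathcal{J}(t)$. For the boundary terms I would use $w(0)=1$ and $z(x,0,t)=u_t(x,t)$ (from \eqref{EQ2.1}) to identify the $\rho=0$ contribution with $+\bar{\xi}\int_{\Omega}u_t^2\,dx$, while $w(1)=e^{-2\tau(t)}(1-\tau'(t))$ produces $-\bar{\xi}\int_{\Omega}e^{-2\tau(t)}(1-\tau'(t))z^2(x,1,t)\,dx$. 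This last term is nonpositive, since $e^{-2\tau(t)}>0$ and $1-\tau'(t)\geq 1-d>0$ by \eqref{hipot_2}, so it may be discarded to arrive at \eqref{y_8}. The main obstacle is thus purely the bookkeeping in the cancellation of the interior terms; the remaining ingredients are the integration by parts and the sign condition $\tau'(t)\leq d<1$.
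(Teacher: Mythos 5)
Your computation is correct; I checked the key cancellation. Adding the interior term $w'(\rho)=e^{-2\tau(t)\rho}\left[-2\tau(t)(1-\tau'(t)\rho)-\tau'(t)\right]$ from the integration by parts to the coefficients $\tau'(t)$ and $-2\tau(t)\tau'(t)\rho$ produced by differentiating the prefactor and the exponential weight does leave exactly $-2\tau(t)e^{-2\tau(t)\rho}$, so the interior contributions sum to $-2\mathcal{J}(t)$; the boundary terms are also handled correctly, using $z(x,0,t)=u_t(x,t)$ from \eqref{EQ2.1} and discarding the $\rho=1$ term, which is nonpositive since $1-\tau'(t)\geq 1-d>0$ by \eqref{hipot_2}. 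Be aware, though, that the paper itself contains no proof of this lemma: it is imported, with citation, from Lemma 3.7 of \cite{Kirane_Said_Anwar}, so your argument supplies a verification the paper omits, and your route (differentiate, eliminate $z_t$ through the transport equation in \eqref{EQ2.2}, integrate by parts in $\rho$) is the standard one used in that reference and its relatives. One remark that eliminates the bookkeeping you identify as the main obstacle: the change of variables $s=t-\tau(t)\rho$ turns \eqref{y_7} into
\[
\mathcal{J}(t)=\bar{\xi}\int_{\Omega}\int_{t-\tau(t)}^{t}e^{-2(t-s)}u_t^2(x,s)\,ds\,dx,
\]
after which Leibniz's rule gives in one step
\begin{align*}
\frac{d}{dt}\mathcal{J}(t) &= \bar{\xi}\int_{\Omega}u_t^2(x,t)\,dx-\bar{\xi}\left(1-\tau'(t)\right)e^{-2\tau(t)}\int_{\Omega}u_t^2\bigl(x,t-\tau(t)\bigr)\,dx-2\mathcal{J}(t),
\end{align*}
and \eqref{y_8} follows by dropping the same nonpositive middle term (note $u_t(x,t-\tau(t))=z(x,1,t)$); no integration by parts or cancellation is required.
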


Now we are in position to prove our result of stability.
\begin{theorem}\label{Principal}
Let $U(t) = (u(t),v(t),\varphi(t),\psi(t),z(t))$ be the solution of \eqref{EQ2.2}-\eqref{EQ2.3} with initial data $U_0 \in D\left( \mathcal{A}(0) \right)$ and $E(t)$ the energy of $U$. Assume that the hypothesis \eqref{hipot_1}, \eqref{hipot_2}, (H1), (H2) and
\begin{equation}\label{hipot_3}
\max \{1, \frac{a}{b}\} < \frac{L_1 + L_3 - L_2}{2(L_2 - L_1)}
\end{equation}
hold. Then there exist positive constants $c$ and $\alpha$ such that
\begin{equation}\label{theorem_principal}
E(t) \leq cE(0)e^{-\alpha t}, \quad \forall t \geq 0.
\end{equation}
\end{theorem}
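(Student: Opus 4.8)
The plan is to build a Lyapunov functional equivalent to the energy whose derivative is controlled by $-E$, and then to conclude by Gr\"onwall. Concretely, I would set
\[
\mathcal{L}(t) = N E(t) + N_1 \mathcal{I}_1(t) + N_2 \mathcal{I}_2(t) + N_3 \mathcal{I}_3(t) + N_4 \mathcal{J}(t),
\]
with positive constants $N, N_1, N_2, N_3, N_4$ to be chosen, and aim to prove two things: the equivalence $\alpha_1 E(t) \le \mathcal{L}(t) \le \alpha_2 E(t)$ and the differential inequality $\frac{d}{dt}\mathcal{L}(t) \le -k E(t)$ for some $k>0$. Together these give $\mathcal{L}'(t) \le -(k/\alpha_2)\mathcal{L}(t)$, and integrating and using the equivalence once more yields \eqref{theorem_principal} with $\alpha = k/\alpha_2$ and $c = \alpha_2/\alpha_1$. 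The equivalence is the routine part: Young's inequality, the Poincar\'e bound \eqref{Poinc_1}, the bound $|q(x)|\le M$ and $0<\tau_0\le\tau(t)\le\tau_1$ show that each $\mathcal{I}_i$ and $\mathcal{J}$ is dominated by $E$, so that $N$ large makes the term $N E(t)$ control the rest.

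The decisive computation is to differentiate $\mathcal{L}$ by adding \eqref{derivate_energy}, \eqref{y_2}, \eqref{y_4}, \eqref{y_5} and \eqref{y_8} with the above weights and to arrange that every term is nonpositive. Writing $\gamma := \frac{L_1+L_3-L_2}{4(L_2-L_1)}>0$, so that the coefficient appearing in \eqref{y_5} is exactly $-\gamma$, the interior terms $\int_{L_1}^{L_2} v_x^2$ are negative for free, the interior terms $\int_{L_1}^{L_2} v_t^2$ have combined coefficient $N_1 - N_3\gamma$ and so require $N_3\gamma > N_1$, while the terms $\int_\Omega u_x^2$ have combined coefficient tending to $a(N_2/2 - N_1)$ as $\varepsilon_1,\varepsilon_2 \to 0$ and so require $N_1 > N_2/2$. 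The boundary terms produced by the multiplier $q$ in \eqref{y_4}--\eqref{y_5} are where the transmission conditions \eqref{EQ2.2.1} must be used: differentiating $u(L_i,t)=v(L_i,t)$ gives $u_t(L_i,t)=v_t(L_i,t)$, so the boundary velocity terms combine to $\frac{N_3-N_2}{4}\big[L_1 u_t^2(L_1,t) + (L_3-L_2) u_t^2(L_2,t)\big]$, nonpositive as soon as $N_2 \ge N_3$; and $a u_x(L_i,t) = b v_x(L_i,t)$ turns the boundary derivative terms into a positive multiple of $(a/b)N_3 - N_2$, nonpositive as soon as $N_2/N_3 \ge a/b$. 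Hence both boundary contributions are harmless precisely when $N_2/N_3 \ge \max\{1, a/b\}$.

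These requirements are compatible exactly because of hypothesis \eqref{hipot_3}. Taking $N_2/N_3 = \max\{1,a/b\}$ to kill the boundary terms, the condition $N_2/2 < N_1 < N_3\gamma$ defining the admissible $N_1$ is solvable if and only if $\tfrac12\max\{1,a/b\} < \gamma$, that is, $\max\{1,a/b\} < 2\gamma = \frac{L_1+L_3-L_2}{2(L_2-L_1)}$, which is \eqref{hipot_3}. Accordingly I would fix $N_3$ first, then $N_2 = \max\{1,a/b\}\,N_3$, then $N_1$ in the nonempty interval $(N_2/2, N_3\gamma)$, and finally $\varepsilon_1,\varepsilon_2$ small enough to keep the $\int_\Omega u_x^2$ coefficient negative.

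With $N_1, N_2, N_3, N_4$ frozen, the only positive contributions left are the $\int_\Omega u_t^2$ terms coming from $\mathcal{I}_1$, $\mathcal{I}_2$ and $\mathcal{J}$ and the $\int_\Omega z^2(x,1,t)$ terms coming from $\mathcal{I}_1$ and $\mathcal{I}_2$; both are absorbed by the strictly negative terms of \eqref{derivate_energy} once $N$ is large, while $-2N_4\mathcal{J}(t)$ from \eqref{y_8} furnishes the missing negative multiple of the delay energy $\int_\Omega\int_0^1 z^2\,d\rho\,dx$ present in $E$ (recall $\mathcal{J}\sim\int_\Omega\int_0^1 z^2\,d\rho\,dx$). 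This gives $\mathcal{L}'(t)\le -kE(t)$ and closes the argument. The genuine obstacle is the simultaneous balancing just described: the weights must meet two competing interior inequalities and two boundary sign conditions at once, and it is only the wave-speed/geometry assumption \eqref{hipot_3} that keeps the admissible window for $N_1$ nonempty. A secondary technical point is that the dissipation in \eqref{derivate_energy} carries the factor $\mu_1(t)$, whereas the auxiliary estimates are uniform (bounded through $\mu_1(0)$ via (H1)--(H2)); one must track this factor so that both the absorption and the equivalence stay uniform in $t$, which is the regime, $\mu_1$ comparable to a positive constant, in which the decay is genuinely exponential rather than $\mu_1$-weighted.
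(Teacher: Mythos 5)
Your proposal follows essentially the same route as the paper's proof: the same Lyapunov functional $N E(t)+\sum_i N_i\mathcal{I}_i(t)+\mathcal{J}(t)$ (the paper simply fixes your $N_4=1$), the same estimates \eqref{y_2}, \eqref{y_4}, \eqref{y_5}, \eqref{y_8} and \eqref{y_10}, the same use of the transmission conditions \eqref{EQ2.2.1} to dispose of the boundary terms, and the same mechanism whereby hypothesis \eqref{hipot_3} makes the window $N_2/2<N_1<\frac{L_1+L_3-L_2}{4(L_2-L_1)}N_3$ nonempty, followed by absorption with large $N$, the equivalence $\mathcal{L}\sim E$, and Gr\"onwall. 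If anything, your balancing of the constants is more explicit than the paper's (which merely asserts such constants exist), and your closing caveat that the dissipation \eqref{derivate_energy} carries the factor $\mu_1(t)$ --- so that the uniform constant $K$ in \eqref{y_10} tacitly requires $\mu_1$ to stay bounded away from zero --- identifies a genuine subtlety that the paper passes over in silence.
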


\begin{proof}
Let us define the Lyapunov functional
\begin{equation}\label{y_9}
\mathcal{L}(t) = N E(t)(t) + \sum_{i=1}^{3}N_i \mathcal{I}_i(t) + \mathcal{J}(t),
\end{equation}
where $N$, $N_i$, $i=1,2,3$ are positive real numbers which will be chosen later. By the Lemma \ref{Lemma_2.2}, there exists a positive constant $K$ such that
\begin{equation}\label{y_10}
\frac{d}{dt}E(t) \leq -K \left[ \int_{\Omega} u_t^2\,dx + \int_{\Omega} z^2(x,1,t)\,dx \right].
\end{equation}
It follows from the transmission conditions \eqref{EQ2.2.1} that
\begin{equation}\label{y_11}
a^2 u_x^2(L_i,t) = b^2 v_x^2(L_i,t), \quad i=1,2.
\end{equation}
Using the estimates \eqref{y_2}, \eqref{y_4}, \eqref{y_5}, \eqref{y_8}, \eqref{y_10} and the equation \eqref{y_11}, we obtain
\begin{align}\label{y_12}
\frac{d}{dt}\mathcal{L}(t) \leq & - \left[ KN - \left( 1 + \frac{1}{2\varepsilon_1} \right)N_1 - \left( \frac{1}{2} + \frac{1}{2\varepsilon_2} \right)N_2 - \bar{\xi} \right] \int_{\Omega} u_t^2\,dx \\
& - \left( KN - \frac{\beta^2}{2\varepsilon_1}N_1 - \frac{\beta^2}{2\varepsilon_2}N_2 \right) \int_{\Omega} z^2(x,1,t)\,dx \nonumber \\
& - \left[ \left( a - \mu_1^2(0)c_1^2 \varepsilon_1 \right)N_1 - \left( \frac{a}{2} + M^2 \mu_1^2(0) \varepsilon_2 \right)N_2 \right] \int_{\Omega} u_x^2\,dx \nonumber \\
& + \left[ N_1 + \frac{L_2-L_3-L_1}{4(L_2-L_1)} N_3 \right] \int_{L_1}^{L_2} v_t^2\,dx \nonumber \\
& - \left[ N_1 - \frac{L_2-L_3-L_1}{4(L_2-L_1)} N_3 \right] b\int_{L_1}^{L_2} v_x^2\,dx \nonumber \\
& - \left(N_2 - N_3 \right) \left[ \frac{L_1}{4}u_t^2(L_1,t) + \frac{L_3-L_2}{4}u_t^2(L_2,t) \right] \nonumber \\
& - \left(N_2 - \frac{a}{b} N_3 \right) \frac{a}{4} \left[ \frac{L_1}{4}u_t^2(L_1,t) + \frac{L_3-L_2}{4}u_t^2(L_2,t) \right]  -2 \mathcal{J}(t). \nonumber
\end{align}

Now we observe that under assumption \eqref{hipot_3}, we can always find real constants $N_1, N_2$ and $N_3$ in such way that
\begin{equation*}
N_1 + \frac{L_2-L_3-L_1}{4(L_2-L_1)} N_3 < 0, \quad N_2 > \max \left\{ 1, \frac{a}{b} \right\} N_3, \quad N_1 > \frac{N_2}{2}.
\end{equation*}
After that, we pick positive constants $\varepsilon_1$ and $\varepsilon_2$ small enough that
\begin{equation*}
\mu_1^2(0)c_1^2 \varepsilon_1 N_1 + M^2 \mu_1^2(0) \varepsilon_2 N_2 < a \left( N_1 - \frac{N_2}{2} \right).
\end{equation*}
Finally, since $\xi(t)\tau(t)$ non-negative and limited, we choose $N$ large enough that \eqref{y_12} is taken into the  following estimate
\begin{align*}
\frac{d}{dt} \mathcal{L}(t) & \leq - \eta_1 \int_{\Omega} \left( u_t^2 + u_x^2 \right)\,dx - \eta_1 \int_{L_1}^{L_2} \left( v_t^2 + v_x^2 \right)\,dx - \eta_1 \int_{\Omega} z^2(x,\rho,t)\,dx - \eta_1 \int_{\Omega} z^2(x,1,t)\,dx \\
&\leq - \eta_1 \int_{\Omega} \left( u_t^2 + u_x^2 \right)\,dx - \eta_1 \int_{L_1}^{L_2} \left( v_t^2 + v_x^2 \right)\,dx - \eta_1 \int_{\Omega} z^2(x,\rho,t)\,dx,
\end{align*}
for a certain positive constant $\eta_1$.

This implies by \eqref{EQ2.8} that there exists $\eta_2 > 0$ such that
\begin{equation}\label{y_13}
\frac{d}{dt} \mathcal{L}(t) \leq - \eta_2 E(t), \quad \forall t \geq 0.
\end{equation}

On the hand, it is not hard to see for $N$ large enough that the $\mathcal{L}(t)\sim E(t)$, i.e. there exists two positive constants $\gamma_1$ and $\gamma_2$ such that
\begin{equation}\label{y_14}
\gamma_1 E(t) \leq \mathcal{L}(t) \leq \gamma_2 E(t), \quad \forall t \geq 0.
\end{equation}

Combining \eqref{y_13} and \eqref{y_14}, we obtain
\begin{equation*}
\frac{d}{dt} \mathcal{L}(t) \leq -\alpha \mathcal{L}(t), \quad \forall t \geq 0
\end{equation*}
which leads to
\begin{equation}\label{y_17}
\mathcal{L}(t) \leq \mathcal{L}(0)e^{-\alpha t}, \quad \forall t \geq 0.
\end{equation}
The desired result \eqref{theorem_principal} follows by using estimates \eqref{y_14} and \eqref{y_17}. Then, the proof of Theorem \ref{Principal} is complete.
\end{proof}

	\subsection*{Acknowledgment}
The authors thanks CAPES(Brazil).

\end{document}